\numberwithin{equation}{section}
\newtheorem{thm}{Theorem}[section]
\newtheorem{defn}{Definition}[section]
\newtheorem{lem}[thm]{Lemma}
\newtheorem{prop}[thm]{Proposition}
\newtheorem{cor}[thm]{Corollary}
\newtheorem*{rem}{Remark}
\newtheorem{prob}{Problem}
\newtheorem{app}{Application}[subsection]
\def\ni{\noindent}
\def\N{\mathbb{N}}
\def\J{\mathbb{J}}
\def\cB{\mathcal{B}}
\title{\textbf{\sc Jaco-Type Graphs and Black Energy Dissipation}}
\author{Johan Kok}
\affil{\small Tshwane Metropolitan Police Department\\ City of Tshwane, South Africa \\ {\tt kokkiek2@tshwane.gov.za.}}
\author{N. K. Sudev}
\affil{\small Centre for Studies in Discrete Mathematics\\ Vidya Academy of Science \&   Technology \\ Thalakkottukara, Thrissur, India.\\ {\tt sudevnk@gmail.com}}
\author{K. P. Chithra}
\affil{\small Naduvath Mana, Nandikkara \\ Thrissur, India.\\ {\tt chithrasudev@gmail.com}}
\author{U. Mary}
\affil{\small Department of Mathematics\\ Nirmala College For Women\\  Coimbatore, India. \\{\tt marycbe@gmail.com}}
\date{}
\begin{document}
\maketitle
 
\vspace{-0.5cm}

\begin{abstract}
\ni In this paper, we introduce the notion of an energy graph as a simple, directed and vertex labeled graph such that the arcs $(u_i, u_j) \notin A(G)$ if $i > j$ for all distinct pairs $u_i,u_j$ and at least one vertex $u_k$ exists such that $d^-(u_k)=0$.  Initially, equal amount of potential energy is allocated to certain vertices. Then, at a point of time these vertices transform the potential energy into kinetic energy and initiate transmission to head vertices. Upon reaching a head vertex, perfect elastic collisions with atomic particles take place and propagate energy further. Propagation rules apply which result in energy dissipation. This dissipated energy is called black energy. The notion of the black arc number of a graph is also introduced in this paper. Mainly Jaco-type graphs are considered for the application of the new concepts.
\end{abstract}
\ni \textbf{Keywords:} Energy graph, black energy, black arc number, black cloud, solid subgraph, Jaco-type graph.

\vspace{0.20cm}

\ni \textbf{Mathematics Subject Classification:} 05C07, 05C38, 05C75, 05C85.

\section{Introduction}

For general notations and concepts in graphs and digraphs see \cite{BM1,CL1,FH,DBW}. Unless mentioned otherwise all graphs in this paper are simple, connected and directed graphs (digraphs). 

\vspace{0.2cm}

We begin by observing that any unlabeled, undirected simple and connected graph of order $n\ge 2$ allows a vertex labeling and orientation such that all arcs $(u_i,u_j) \notin A(G)$ if $i>j$. This can be established as explained below.

\vspace{0.2cm}

Any arbitrary vertex can be labeled $u_1$ and all edges incident with $u_1$ can be orientated as out-arcs. Thereafter, all neighbours of $u_1$ can randomly be labeled  $u_2,u_3,\ldots,u_{d_G(u_1)}$. Sequentially, the resultant edges (if any) of vertex $u_2$ can be orientated as out-arcs to its resultant neighbours, followed by similar orientation of the resultant edges of $u_3,u_4,\ldots,u_{d_G(u_1)}$. Proceeding iteratively, the graph $G$ allows a vertex labeling and orientation as prescribed. 

\vspace{0.2cm}

The above mentioned process of labeling the vertices and assigning orientation to the edges of a given graph is indeed well-known and called {\em topological ordering.} For the purposes of this study the digraph thus obtained is called an \textit{energy graph}. 

\vspace{0.2cm}

The vertex $u_1$ with $d^-(u_1)=0$ is called a \textit{source vertex} and the vertex $u_n$ and possibly others with out-degree equal to zero are called \textit{sink vertices}. 

\vspace{0.2cm}

In view of the above definition on energy graphs, the following is an immediate result.

\begin{lem}\label{Lem-2.1}
An energy graph of order $n\ge 2$ has at least one source vertex and at least one sink vertex.
\end{lem}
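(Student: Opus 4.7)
The plan is to read the two requirements directly out of the acyclic orientation imposed by the vertex labeling. Because the defining condition on an energy graph forbids any arc $(u_i,u_j)$ with $i>j$, arcs may only go from smaller-indexed vertices to larger-indexed ones. I would exploit this monotonicity twice: once on the left end of the labeling to locate a source, and once on the right end to locate a sink.

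First I would consider the vertex $u_1$. Since any arc incident to $u_1$ is of the form $(u_1,u_j)$ with $j>1$ (there is no index $i<1$), the vertex $u_1$ cannot be the head of any arc, so $d^-(u_1)=0$. Thus $u_1$ itself is a source vertex, establishing the first half of the claim. Note that this also automatically produces the vertex $u_k$ with $d^-(u_k)=0$ demanded in the definition, so existence is guaranteed for every $n\ge 2$.

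Next I would apply the symmetric argument at the other end of the labeling. Consider $u_n$. Every arc incident to $u_n$ would have to be of the form $(u_n,u_j)$ with $j>n$; but no such index exists, so $d^+(u_n)=0$, meaning $u_n$ is a sink vertex. Combining these two observations completes the proof.

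I do not anticipate any real obstacle: the statement is essentially a restatement of the topological-ordering property used to define an energy graph, and both claims follow immediately by looking at the extremal labels $u_1$ and $u_n$. The only thing to take care of is to note that the hypothesis $n\ge 2$ ensures $u_1\ne u_n$ only if one wants the source and sink to be distinct — but the lemma as stated only requires the existence of (possibly coinciding) source and sink vertices, and both identifications above are valid for all $n\ge 2$.
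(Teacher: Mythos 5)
Your proof is correct, and it takes a more direct route than the paper's. The paper argues from the \emph{construction}: it revisits the topological-ordering procedure and observes that if the ordering begins with $\ell$ mutually non-adjacent vertices, then completing the labeling and orientation yields $\ell$ source vertices, with a symmetric remark for sinks. You instead read both conclusions straight off the defining arc condition: since $(u_i,u_j)\notin A(G)$ whenever $i>j$ and loops are excluded, every arc points from a smaller index to a larger one, so $u_1$ can have no in-arc (there is no index below $1$) and $u_n$ can have no out-arc (there is no index above $n$), giving $d^-(u_1)=0$ and $d^+(u_n)=0$. Your argument has the advantage of applying to any labeled digraph satisfying the energy-graph condition, independent of how the labeling was produced, and it identifies explicit witnesses $u_1$ and $u_n$; the paper's version instead yields the extra information that several source (or sink) vertices can arise, which is a point about the flexibility of the construction rather than the minimal existence claim of the lemma. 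Your closing remark is also accurate: with $n\ge 2$ the witnesses $u_1$ and $u_n$ are automatically distinct, though the lemma does not require this.
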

\begin{proof}
\textit{Part (i):} Firstly, that some energy graphs have exactly one source vertex follows from the topological ordering. Now, if topological ordering begins with a number of non-adjacent vertices say, $\ell < n$ such vertices, the vertices can randomly be labeled $u_1,u_2,u_3,\ldots,u_\ell$. Completing the vertex labeling and edge orientation as described clearly results in $\ell$ source vertices. Therefore, an energy graph has at least one source vertex.

\textit{Part (ii):}  In a similar way we can establish the existence of at least one sink vertex.
\end{proof}

In general, a simple and connected digraph of order $n\ge 2$ does not necessarily allow a vertex labeling to render it an energy graph. However, it can be observed that a simple, connected acyclic digraph always allows such vertex labeling. 

\subsection{Black Energy Dissipation within an Energy Graph}

The energy propagation model obeys the following propagation rules. An energy graph \textit{per se} has zero mass. Each vertex $u_i$ is allocated a number equal to $d^+(u_i)$ of atomic particles, all of equal mass $m>0$. All the atomic particles are initially at rest at their respective vertices. 

\vspace{0.2cm}

All source vertices $u_k$ are initially allocated an equal amount of potential energy of $\xi >0$ joules. The allocated potential energy is shared equally amongst the atomic particles. At time $t=0$, all source vertices simultaneously ignite kinetic energy, and only one atomic particle per out-arc transits an out-arc towards the head vertex. Hence, when transition ignites for each source vertex $u_k$ with $d^+(u_k)= \ell$ we have $\ell \cdot\frac{1}{2}mv^2 = \xi$ joule. 

\vspace{0.2cm}

Regardless of the true speed $v$ of an atomic particle, the time to transit along an arc is considered to be 1 time unit. Therefore, the minimum travelling time $t$ for some atomic particle to reach a head vertex $u_j$ from $u_i$ is equal to the directed distance between $u_i$ and $u_j$ i.e. $t= d^{\rightarrow}_G(u_i,u_j)$. 

\vspace{0.2cm}

Upon one or more say, $\ell'$  of the first atomic particles reaching a head vertex $u_j$ simultaneously, the atomic particles merge into a single atomic particle of mass $\ell'm$ and engage in a perfect elastic collision with the atomic particles allocated to the head vertex $u_j$. Clearly these atomic particles transitted from one or more source vertices such for any such source vertex say, $u_i$ the directed distance $d^\rightarrow_G(u_i,u_j)$ is a minimum over all directed distances from source vertices to $u_j$. For the perfect elastic collision the laws of conservation of momentum and total energy apply. The kinetic energy of all atomic particles reaching the head vertex $u_j$ later ($d^\rightarrow_G(u_i,u_j)$ not a minimum) than those which arrived first, together with the \textit{mass-energy equivalence,} $mc^2$, ($c$ is the speed of light) dissipate into the surrounding universe. This dissipated energy is called \textit{black energy}, denoted $\mathfrak{E}_G$. The arcs along which these late-coming particles transmitted are called \textit{black arcs}. Hence, a black arc is an in-arc of some vertex $u_j$ that does not lie on a minimum directed distance path from any source vertex $u_i$.

\vspace{0.2cm}

Kinetic energy reaching a sink vertex $u_t$ is stored (capacitated) as potential energy $\xi_{u_t}'$ joule. By the conservation of momentum and total energy it follows that $\mathfrak{E}_G = \sum\limits_{i=1}^{n}mc^2\cdot d^+(u_i) +\sum\limits_{(u_k~a~source~vertex)}\xi_{u_k} - \sum\limits_{(u_t~a~sink~vertex)}\xi'_{u_t} -$ (\textit{total dissipated mass-energy equivalence}). To find closure for the expression, the last term must be determined for an energy graph $G$.

\section{Black Energy Dissipation within Jaco-Type Graphs}

For ease of introductory analysis we consider graphs with well-defined vertex labeling and well-defined orientation. The families of graphs which naturally offers this research avenue are Jaco graphs and Jaco-type graphs. 

\vspace{0.2cm}

The concept of linear Jaco graphs was introduced in \cite{KF1} and studied initially in \cite{KF1,KF2}. Further studies on these graph classes have been reported in \cite{KSS1,JK,KSC1} and following these studies some other significant papers have been published. 

\vspace{0.2cm}

In \cite{KSC1} it is reported that a linear Jaco graph $J_n(x)$ can be defined a the graphical embodiment of a specific sequence. The introductory research (see \cite{KSC1}) dealt with non-negative, non-decreasing integer sequences. This observation has opened a wide scope for the graphical embodiment of countless other interesting integer sequences.  These graphs are broadly termed as \textit{Jaco-type graphs} and the notions of  finite and infinite Jaco-type graphs are as given below.  

\begin{defn}\label{Defn-1.1}{\rm 
\cite{KSC1} For a non-negative, non-decreasing integer sequence $\{a_n\}$, the \textit{infinite Jaco-type graph}, denoted by $J_\infty(\{a_n\})$, is a directed graph with vertex set $V(J_\infty(\{a_n\}))= \{u_i: i \in \N\}$ and the arc set $A(J_\infty(\{a_n\})) \subseteq \{(u_i, u_j): i, j \in \N, i< j\}$ such that $(u_i,u_ j) \in A(J_\infty(\{a_n\}))$ if and only if $i+ a_i \ge j$.
}\end{defn}

\begin{defn}\label{Defn-1.2}{\rm 
\cite{KSC1} For a non-negative, non-decreasing integer sequence $\{a_n\}$, the \textit{finite Jaco-type Graph} denoted by $J_n(\{a_n\})$, is the set of finite subgraphs of the infinite Jaco-type graph $J_\infty(\{a_n\});n\in \N$. 
}\end{defn}

Jaco graphs and Jaco-type graphs can be used for modelling many theoretical and practical problems. In this paper, we discuss some of the applications of these types of graph classes.

\begin{lem}
Finite Jaco-type graphs are energy graphs.
\end{lem}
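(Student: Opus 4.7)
The plan is to verify the two defining properties of an energy graph directly from Definitions 1.1 and 1.2. Recall that an energy graph must be a simple, directed, vertex-labeled graph satisfying (a) no arc $(u_i,u_j)$ exists whenever $i>j$, and (b) at least one vertex has in-degree zero. Both conditions should be essentially immediate from the prescribed arc set of $J_\infty(\{a_n\})$.

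First, I would observe that condition (a) is built into Definition 1.1: the arc set is explicitly declared as a subset of $\{(u_i,u_j): i,j\in\N,\ i<j\}$. Since $J_n(\{a_n\})$ is (by Definition 1.2) a finite subgraph of $J_\infty(\{a_n\})$, no arc $(u_i,u_j)$ with $i>j$ can occur in it either. This also shows simplicity: there are no multiple arcs or loops, since the labels on the endpoints of any arc are strictly ordered.

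Second, to establish condition (b), I would exhibit the smallest-indexed vertex as a source. Under the natural convention that $J_n(\{a_n\})$ is the induced subgraph on $\{u_1,u_2,\ldots,u_n\}$, the vertex $u_1$ admits no in-arc, since any such arc would be of the form $(u_i,u_1)$ with $i\in\N$ and $i<1$, which is impossible. Hence $d^-(u_1)=0$ and $u_1$ is a source vertex. (If one instead interprets Definition 1.2 as allowing an arbitrary finite induced subgraph, the same argument applies to whichever vertex in that subgraph carries the smallest index.)

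There is no real obstacle here; the statement is an essentially structural observation, and the whole argument amounts to reading off the two required properties from the arc-set definition together with the fact that $1$ is the minimum element of $\N$. The only point worth flagging explicitly is the clarification of what \emph{finite Jaco-type graph} means in Definition 1.2, so that $u_1$ (or the minimum-indexed vertex) is guaranteed to be present.
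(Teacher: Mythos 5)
Your proof is correct and follows essentially the same route as the paper, which simply notes directly from Definitions 1.1 and 1.2 that $(u_i,u_j)\notin A(J_n(\{a_n\}))$ whenever $i>j$ and that $u_1$ is a source vertex. Your version merely spells out these two observations (and the reading of Definition 1.2 as the induced subgraph on $\{u_1,\dots,u_n\}$) in more detail than the paper does.
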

\begin{proof}
In view of Definition \ref{Defn-1.1} and \ref{Defn-1.2}, it can be noted directly that a finite Jaco-type graph has both a unique source vertex $u_1$ and and primary sink vertex $u_n$.  Furthermore, $(u_i,u_j) \notin A(J_n(\{a_n\}))$ if $i>j$. Hence, every finite Jaco-type graph is an energy graph.
\end{proof}

\subsection{Jaco-Type Graph for the Sequence of Natural Numbers} 

For purpose of notation all sequences will be labeled $s_i$, $i \in \N$. The definition of the infinite Jaco-Type graph corresponding to the captioned sequence can be derived from Definition \ref{Defn-1.1}. We have the graph  $J_\infty(s_1)$, defined by $V(J_\infty(s_1)) = \{u_i: i \in \N\}$, $A(J_\infty(s_1)) \subseteq \{(u_i, u_j): i, j \in \N, i< j\}$ and $(u_i,u_ j) \in A(J_\infty(s_1))$ if and only if $2i\ge j$. Note that a finite Jaco-Type graph $J_n(s_1)$ in this family is obtained from $J_\infty(s_1)$ by lobbing off all vertices $u_k$ (with incident arcs) $\forall\, k > n$. 

\vspace{0.25cm}

The arrival times of the atomic particles at a head vertex $u_j$ will be stringed and denoted $u_j \sim\langle t_1,t_2,t_3,\ldots, t_{d^-_G(u_j)}\rangle$ such that $t_i \le t_l$ for $1\le i,l\le {d^-_G(u_j)}$. For the Jaco-type graph $J_8(s_1)$ we find (see Figure 1): $u_1\sim \langle 0\rangle$, $u_2\sim \langle 1\rangle$, $u_3 \sim \langle 2\rangle$, $u_4\sim \langle 2,3\rangle$ hence, ($\frac{1}{6}\xi + mc^2$) joules transiting along arc $(u_3,u_4)$ dissipate as black energy. Then we have $u_5\sim \langle 3,3\rangle$ so two atomic particles merge to collide perfectly at $u_5$ with $\frac{7}{24}\xi$ joules. Then $u_6\sim \langle3,3,4\rangle$ hence, $\frac{7}{24}\xi$ joules collide perfectly at $u_6$  and ($\frac{7}{72}\xi +mc^2$) joules dissipate as black energy. At vertices $u_7$ and $u_8$ the arrival times are stringed as $u_7\sim \langle 3,4,4\rangle$ and $u_8\sim \langle 3,4,4,4\rangle$ respectively. At $u_7$ and $u_8$ energy amounting to $(\frac{7}{72}\xi + \frac{7}{48}\xi + 2mc^2)$ joules and $(\frac{7}{72}\xi + \frac{7}{48}\xi  + \frac{1}{8}\xi+ 3mc^2)$ joules dissipate as black energy, respectively. Only $(\frac{1}{8}\xi +mc^2)$ joules reach the sink vertex, $u_8$. The energy graph has a total of $(\frac{1}{8}\xi + 9mc^2)$ joules potential energy and mass-energy equivalence, capacitated within the graph. Hence, a total of $(\frac{7}{8}\xi +7mc^2)$ joules dissipated as black energy. Figure \ref{fig:Fig-1} depicts $J_{8}(s_1)$.

\begin{figure}[h!]
\centering
\includegraphics[width=0.6\linewidth]{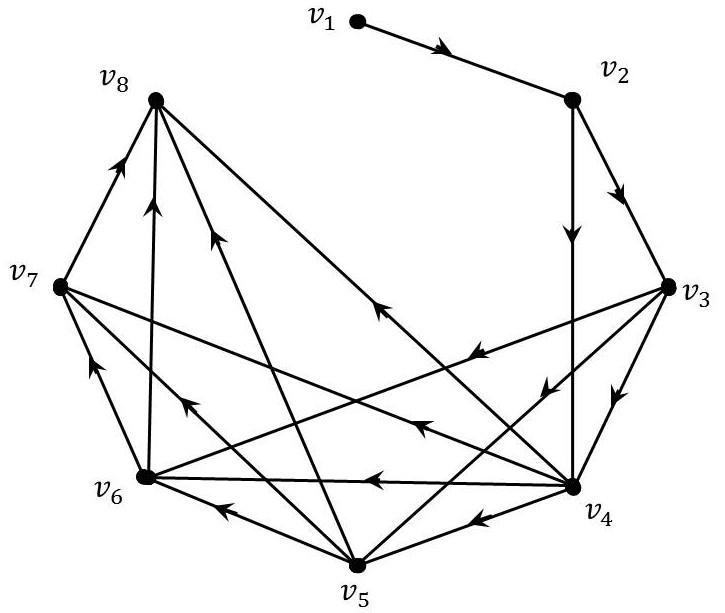}
\caption{$J_{8}(s_1)$.}
\label{fig:Fig-1}
\end{figure}

\subsection{Jaco-Type Graph for the Fibonacci Sequence} 

The definition of the infinite Jaco-Type graph corresponding to the Fibonacci sequence, $s_2=\{f_n\}$, $f_0=0,f_1 = 1,f_2=1$, $f_n=f_{n-1}+f_{n-2}$, $n = 1,2,3,\ldots$ can be derived from Definition \ref{Defn-1.1}. We have the graph  $J_\infty(s_2)$, defined by $V(J_\infty(s_2)) = \{u_i: i \in \N\}$, $A(J_\infty(s_2)) \subseteq \{(u_i, u_j): i, j \in \N, i< j\}$ and $(u_i,u_ j) \in A(J_\infty(s_2))$ if and only if $i + f_i\ge j$. 

\vspace{0.25cm}

For the Jaco-type graph $J_{12}(s_2)$ we find (see Figure \ref{fig:Fig-2}) $v_1\sim \langle 0\rangle$, $u_2\sim \langle 1\rangle$, $u_3 \sim \langle 2\rangle$, $u_4\sim \langle3\rangle$ and $u_5\sim \langle3,4\rangle,u_6\sim \langle 4,4\rangle,u_7\sim \langle 4,4,5\rangle, u_8\sim \langle 4,5,5\rangle, u_9\sim \langle 4,5,5,5\rangle, u_{10}\sim \langle 4,5,5,5,5\rangle, u_{11}\sim \langle 5,5,5,5,5\rangle$ and $u_{12}\sim \langle 5,5,5,5,5,6\rangle$. It implies that only $(\frac{163}{450}\xi + 5mc^2)$ joules kinetic energy capacitate at the sink vertex $u_{12}$. The energy graph has a total of $(\frac{163}{450}\xi + 21mc^2)$ joules potential energy and mass-energy equivalence, allocated within the graph. This means that $(\frac{287}{450}\xi + 12mc^2)$ joules dissipate into black energy. Figure \ref{fig:Fig-2} depicts $J_{12}(s_2)$. 

\begin{figure}[h!]
\centering
\includegraphics[width=0.6\linewidth]{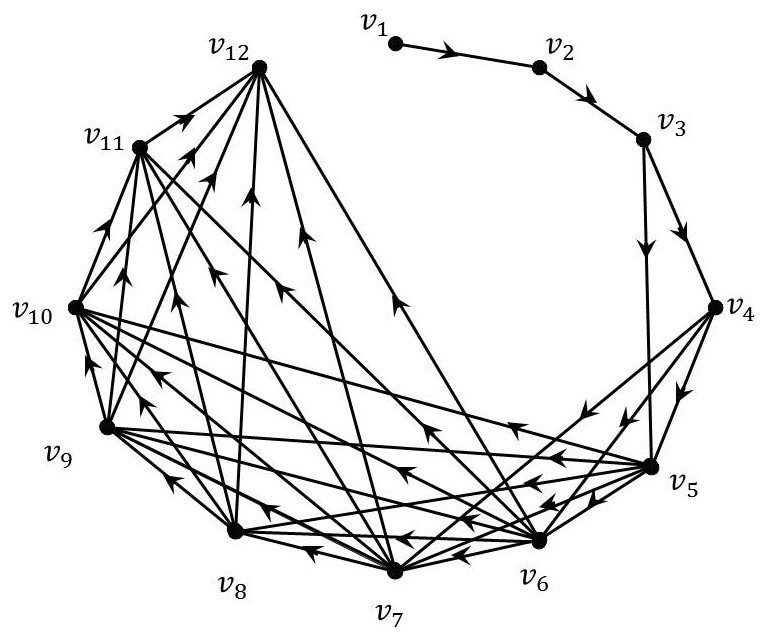}
\caption{$J_{12}(s_2)$.}
\label{fig:Fig-2}
\end{figure}

\subsection{Number of Black Arcs of Certain Graphs}

For a simple connected graph $G$ in general, the number of black arcs dependent on the orientation of a corresponding energy graph. In general an energy graph for a given graph $G$ is not unique. Denote an orientation of a graph by $\varphi(G)$. Denote the number of black arcs in respect of a given $\varphi(G)$ to be $b^{\varphi(G)}(G)$. Define the black arc number, $b^\bullet(G) = \min\{b^{\varphi(G)}(G): \forall\,\ \varphi(G)\}$. First we present a perhaps obvious, but useful lemma.

\begin{lem}\label{Lem-3.2}
If the stringed arrival times of atomic particles at vertex $u_j$ in an energy graph $G$ is $u_j\sim \langle t_1,t_2,t_3,\ldots, t_\ell, t_{\ell+1}, t_{\ell+2},\ldots, t_{d^-_G(u_j)}\rangle$ and $t_1=t_2=t_3=\cdots =t_\ell <t_{\ell+1}\le t_{\ell +2} \le \cdots \le v_{d^-_G(u_j)}$, then $d^-_G(u_j) - \ell$ in-arcs of $u_j$ dissipate black energy.
\end{lem}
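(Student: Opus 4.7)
The plan is to reduce the statement directly to the definition of a black arc given earlier in the text: an in-arc of $u_j$ is black precisely when it does not lie on a minimum directed distance path from any source vertex to $u_j$. Thus the claim amounts to showing that the in-arcs indexed by $\ell+1,\ldots,d^-_G(u_j)$ in the stringed list are exactly the non-minimum-path in-arcs of $u_j$.

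First I would observe that, under the propagation rules, the earliest time a particle can reach any vertex $u_i$ equals $\min\{d^\rightarrow_G(u_s,u_i) : u_s \text{ a source}\}$, because every arc takes exactly one time unit and a fresh particle is dispatched along every out-arc of each reached vertex. Consequently, the arrival time at $u_j$ along an in-arc $(u_i,u_j)$ equals $\min_{u_s}d^\rightarrow_G(u_s,u_i)+1$, which is bounded below by $\min_{u_s}d^\rightarrow_G(u_s,u_j)$, with equality if and only if $(u_i,u_j)$ lies on a minimum directed path from some source $u_s$ to $u_j$.

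Applying this to the stringed sequence, the minimum arrival time $t_1$ must coincide with $\min_{u_s}d^\rightarrow_G(u_s,u_j)$. Hence each of the first $\ell$ in-arcs lies on some minimum directed path from a source and is therefore not black, while every in-arc indexed $k>\ell$ has arrival time $t_k>t_1$ and so fails the equality condition, meaning it cannot lie on any minimum directed path from a source. By the definition just recalled, these $d^-_G(u_j)-\ell$ in-arcs are black arcs and the particles traversing them dissipate as black energy, which is exactly the asserted count.

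The main — and really the only — obstacle is to justify the identification of \emph{earliest arrival time} with \emph{directed distance from the nearest source}. This is not a deep point but deserves explicit mention, since the model permits particles to merge at intermediate vertices; one must confirm that such merging does not delay subsequent propagation, which is immediate because at every visited vertex a new particle is dispatched along each out-arc regardless of any merge event that occurred on arrival.
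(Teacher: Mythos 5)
Your proposal is correct and follows essentially the same route as the paper: the paper's proof is a one-line appeal to the propagation rules (the $\ell$ simultaneous first arrivals merge and collide; by the rules every later-arriving particle dissipates), and your argument unpacks exactly this via the shortest-path characterisation of black arcs, which the paper itself states as an equivalent formulation in Section 1.1. Your extra step identifying earliest arrival time with directed distance from the nearest source is a detail the paper builds into its rules rather than re-derives, so nothing is missing on either side.
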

\begin{proof}
Because $t_1=t_2=t_3=\cdots =t_\ell <t_{\ell+1}\le t_{\ell +2} \le \cdots \le u_{d^-_G(u_j)}$, the atomic particles at vertices $u_i$, $1\le i \le \ell$ simultaneously arrive first at vertex $u_j$ and they merge to collide perfectly with the atomic particles allocated to $u_j$. In terms of the rules of the energy propagation model all other atomic particles along the other in-arcs of $u_j$ dissipate black energy. 
\end{proof}

To illustrate the use of Lemma \ref{Lem-3.2}, it is now applied to star graphs, paths and cycles. First, an obvious corollary.

\begin{cor}
$b^\bullet(G)=0 \Leftrightarrow\mathfrak{E}(G) = 0$.
\end{cor}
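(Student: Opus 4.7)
The plan is to reduce the biconditional to a structural decomposition of $\mathfrak{E}(G)$ as a sum indexed by black arcs. By the propagation rules the only contributions to $\mathfrak{E}(G)$ arise from atomic particles that reach a head vertex $u_j$ strictly later than the first wave, and an in-arc of $u_j$ is a black arc precisely when it transmits such a late particle. Each black arc carries exactly one particle of mass $m$, which donates its strictly positive kinetic energy together with its mass-energy equivalence $mc^{2}>0$ to $\mathfrak{E}(G)$. This yields the decomposition
\[
\mathfrak{E}(G) \;=\; \sum_{\text{black arcs of }\varphi(G)} \Bigl(\tfrac{1}{2}mv^{2} + mc^{2}\Bigr),
\]
a sum of strictly positive terms whose cardinality equals $b^{\varphi(G)}(G)$.

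For the forward direction I would assume $b^{\bullet}(G)=0$ and fix an orientation $\varphi(G)$ realising this minimum. Then $\varphi(G)$ contains no black arc, and Lemma \ref{Lem-3.2} applied at every head vertex $u_{j}$ (with $\ell=d^{-}_{G}(u_{j})$) confirms that every in-arc is a first-arrival arc. The indexing set in the displayed identity is therefore empty, so $\mathfrak{E}(G)=0$.

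For the converse, suppose $\mathfrak{E}(G)=0$ under the minimising orientation. Since each summand in the display is bounded below by $mc^{2}>0$, the sum vanishes only if the indexing set is empty. Hence $\varphi(G)$ has no black arcs, which forces $b^{\bullet}(G)=0$.

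The proof contains no real obstacle; the only mild point requiring care is that $b^{\bullet}(G)$ and $\mathfrak{E}(G)$ must be interpreted with respect to the same minimising orientation, which is legitimate because both quantities are nonnegative and the displayed identity shows that their global minima are attained simultaneously.
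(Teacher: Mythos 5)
Your proposal is correct and takes essentially the approach the paper intends: the corollary is stated there without proof as ``obvious,'' and your decomposition of $\mathfrak{E}(G)$ as a sum over black arcs, each term bounded below by $mc^2>0$, is precisely the observation the paper records immediately afterwards (``the portion of black energy resulting from atomic particles dissipating their mass-energy equivalence is given by $b^\bullet(G)\cdot mc^2$ joules''). Your closing remark about interpreting both quantities relative to the same orientation is a sensible clarification of a genuine ambiguity in the paper's notation, and it is handled correctly since a vanishing minimum of either quantity forces the existence of an orientation with no black arcs at all.
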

Note that for an energy graph $G$ the portion of black energy resulting from atomic particles dissipating their mass-energy equivalence is given by $b^\bullet(G)\cdot mc^2$ joules.
\begin{prop}
For star graphs $S_{1,n}$, $n\ge 3$; for paths $P_n$, $n\ge 2$ and for cycles $C_n$, $n\ge 3$ we have
\begin{enumerate}\itemsep0mm
	\item[(i)] $b^\bullet(S_{1,n})=0$ and $\mathfrak{E}(S_{1,n}) = 0,\forall\, \varphi(S_{1,n})$,
	\item[(ii)] $b^\bullet(P_n)=0$ and $\mathfrak{E}(P_n)=0$.
	\item[(iii)] $b^\bullet(C_n) =
	\begin{cases}
	0,  &  n\ \text{is even},\\
	1,  &  n\ \text{is odd}, 
	\end{cases}$ \\
	\textrm{and}
	$\mathfrak{E}(C_n) =
	\begin{cases}
	0,  &  n\ \text{is even},\\
	\frac{1}{2}\xi + mc^2,  &  n\ \text{is odd}. 
	\end{cases}$
\end{enumerate}
\end{prop}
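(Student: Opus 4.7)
The plan is to exhibit, for each family, an orientation that realises the claimed value of $b^{\bullet}$ and then to compute the induced black energy by tracing the elastic collisions. For $S_{1,n}$ I would argue the stronger claim that \emph{every} valid orientation gives zero black arcs: the centre $c$ occupies some label $u_k$, the $k-1$ leaves with smaller indices are necessarily sources whose particles all reach $c$ simultaneously at time $1$, and the remaining $n-k+1$ leaves are sinks of in-degree one. Lemma \ref{Lem-3.2} then forces $b^{\varphi}(S_{1,n})=0$ and $\mathfrak{E}(S_{1,n})=0$ for every $\varphi$. For $P_n$ I would use the linear orientation $u_1\to u_2\to\cdots\to u_n$, under which every vertex has in-degree at most one and multiple-arrival events cannot occur, again giving $b^{\bullet}(P_n)=\mathfrak{E}(P_n)=0$.

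For cycles the even case is handled by the bipartition: label one colour class $u_1,\ldots,u_{n/2}$ and the other $u_{n/2+1},\ldots,u_n$; every arc then runs from a source to an in-degree-$2$ sink whose two in-neighbours are adjacent sources, so both particles arrive simultaneously at time $1$ and no black arcs arise. For odd $n$ the crux is the lower bound $b^{\bullet}(C_n)\ge 1$, which I would prove by a parity argument. Assume a zero-black-arc orientation exists. Then, by Lemma \ref{Lem-3.2}, the first-arrival time $t$ satisfies $t(u_j)=t(u_i)+1$ on every arc $(u_i,u_j)$. Walking once around the cycle in a fixed direction, each edge contributes $+1$ or $-1$ to the telescoping sum $\sum(t(\text{head})-t(\text{tail}))$ depending on whether the arc agrees with or opposes the walking direction; this sum must vanish, forcing equal numbers of agreeing and opposing arcs and hence $n$ even, a contradiction.

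For the matching upper bound I would explicitly construct an orientation with a single black arc. Take $(n-1)/2$ pairwise non-adjacent cycle vertices as sources so that the cycle decomposes into one arc of length $3$ and $(n-3)/2$ arcs of length $2$ (always realisable since $2\cdot\tfrac{n-3}{2}+3=n$). Each length-$2$ arc contributes a single interior vertex flanked by two sources with simultaneous arrivals at time $1$; the length-$3$ arc with interior vertices $w_1,w_2$, however, produces — regardless of how the edge $w_1w_2$ is oriented — exactly one arrival string $\langle 1,2\rangle$ and hence a single black arc. To compute $\mathfrak{E}(C_n)$, observe that every source has out-degree $2$, so every outgoing particle carries $\xi/2$ joules; the equal-mass elastic collision at the intermediate interior vertex is energy-preserving, so the late particle still carries $\xi/2$ joules on arrival and dissipates, together with its mass-energy equivalence $mc^2$, a total of $\tfrac{1}{2}\xi+mc^2$. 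The main obstacle is the parity argument for the odd-cycle lower bound; the remaining verifications reduce to direct applications of Lemma \ref{Lem-3.2} together with bookkeeping of kinetic energy through equal-mass elastic collisions.
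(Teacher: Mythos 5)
Your argument is correct, and for the cycle case it takes a genuinely different --- and in one respect stronger --- route than the paper. For the star and the path you do essentially what the paper does (the paper enumerates the same orientations of $S_{1,n}$ by the number $\ell$ of arcs pointing into the centre, which is your $k-1$). For the even cycle the paper does not use the bipartition; it labels the two halves of the cycle as two directed paths emanating from a single source $u_1$ and meeting at the antipodal vertex $u_n$, so that $u_n\sim\langle \frac{n}{2},\frac{n}{2}\rangle$; your source/sink alternating orientation reaches the same conclusion with a one-step propagation and is if anything simpler. The substantive difference is in the odd case: the paper only exhibits an orientation (again the two directed paths from a single source) in which exactly one black arc appears, giving $b^{\bullet}(C_n)\le 1$ and the energy count $\frac{1}{2}\xi+mc^2$, but it never rules out an orientation with no black arcs, even though the later characterisation theorem leans on precisely this fact. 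Your telescoping parity argument --- that a black-arc-free energy graph forces $t(u_j)=t(u_i)+1$ along every arc by Lemma \ref{Lem-3.2}, so that the signed sum of increments around the cycle vanishes and $n$ must be even --- supplies that missing lower bound cleanly, so your proof is the more complete of the two. Your upper-bound construction with $(n-1)/2$ sources is also valid (the single length-$3$ segment yields the arrival string $\langle 1,2\rangle$ and hence one black arc carrying $\frac{1}{2}\xi+mc^2$), though note it realises the same dissipated energy as the paper's single-source orientation while injecting more total potential energy into the graph; since the claim concerns only the black energy, this does not affect the result.
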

\begin{proof}
Part (i): Consider a star graph $S_{1,n},\ n\ge 3$ and first orientate all edges as out-arcs from the central vertex. Label the central vertex as $u_1$ and randomly vertex label the pendant vertices $u_i, 2\le i \le n+1$. Clearly, at $t=1$ a single atomic particle has transited an arc to reach a corresponding pendant sink vertex. All arcs were transited simultaneously and hence $u_i\sim\langle 1\rangle,\ 2\le i \le n+1$. Hence, $b^\bullet(S_{1,n}) = 0$. Hence, $\mathfrak{E}(S_{1,n}) = 0$. Now take the inverse orientation and the reasoning remains the same.

\vspace{0.25cm}

Finally, and without loss of generality, consider the first orientation of the star graph and inverse the orientation of any number $\ell < n$ arcs. Now the star digraph has $\ell$ source vertices. At $t=1$, $\ell$ atomic particles will merge at $u_1$ for a perfect collision to transfer $\ell\cdot \xi$ joules to the $n-\ell$ atomic points allocated to $u_1$. So $u_1\sim \langle \underbrace{1,1,1,\ldots,1}_{\ell-entries}\rangle$. The aforesaid kinetic energy will equally divide amongst $n-\ell$ atomic particles, each transited an arc to reach a corresponding pendant sink vertex, $u_j$. For each sink vertex we have, $v_j\sim \langle 2\rangle$.  Total energy is conserved hence, $\mathfrak{E}(S_{1,n}) = 0$ implying $b^\bullet(S_{1,n}) = 0$. The result holds for all $\varphi(S_{1,n}),\ n\ge3$.

\vspace{0.25cm}

Part (ii): Consider any path $P_n$, $ n\ge 2$ and label the vertices to have the path $u_1u_2u_3\ldots u_n$ and orientate with arcs $(u_i,u_j)$, $1\le i \le n-1$ and $j=i+1$. Clearly, on igniting propagation of energy and atomic particle mass, we have $u_i\sim \langle i-1\rangle$, $1 \le i\le n$. Since a single atomic particle arrives at each vertex $u_j$, $j\ne 1$ no black edges exist. Therefore, $b^\bullet(P_n) = 0\Rightarrow \mathfrak{E}(P_n) = 0$.

\vspace{0.25cm}

Part (iii)(a): Consider a cycle $C_n$, $n\ge 4$, $n$ even. Let the vertices of $C_n$ be located along the circumference of a circle with vertex $u_1$ at the exact-top, and $u_n$ at the exact-bottom of the circle. From $u_1$ label the vertices located anticlockwise consecutively, $u_2,u_3,u_4,\ldots,u_{\frac{n}{2}}$ and orientate all corresponding edges including $u_{\frac{n}{2}}u_n$, anticlockwise. Similarly, from $u_1$ label vertices located clockwise consecutively, $u_{\frac{n}{2}+1}, u_{\frac{n}{2}+2},\ldots,u_{n-1}$ and orientate all corresponding edges including $u_{n-1}u_n$, clockwise. Clearly, this construction of an energy cycle does not contradict generality. Hence, on igniting propagation of energy and atomic particle mass we have $u_i\sim \langle i-1\rangle$, $1 \le i\le \frac{n}{2}$ and $u_{\frac{n}{2}+i} \sim\langle i\rangle$, $1\le i\le \frac{n}{2}-1$. Therefore,  $u_n\sim \langle \frac{n}{2},\frac{n}{2}\rangle$. In view of Lemma \ref{Lem-3.2}, no black arc exists. Therefore, $b^\bullet(C_n) = 0\Rightarrow \mathfrak{E}(C_n) = 0$ if $n$ even.

\vspace{0.25cm}

Part (iii)(b): The vertex labeling and orientation of a cycle $C_n$, where $n\ge 3$ and is odd, follow similar to Part iii(a) with the anticlockwise vertex labels running through $u_2,u_3,\ldots,u_{\lfloor\frac{n}{2}\rfloor}$ and the clockwise vertex labels running through $u_{\lfloor\frac{n}{2}\rfloor +1}, u_{\lfloor\frac{n}{2}\rfloor +2},\ldots, u_{n-1}$. Note that the arc $(u_{n-1},u_n)$ results in $u_n\sim \langle \lfloor \frac{n}{2}\rfloor, \lceil\frac{n}{2}\rceil\rangle$. So exactly one arc results in a black arc with black energy, $(\frac{1}{2}\xi + mc^2)$ joules. Therefore,  $b^\bullet(C_n) = 1 \Rightarrow \mathfrak{E} = \frac{1}{2}\xi + mc^2$ because the initial amount of potential energy capacitated at $u_1$ is $\xi$ joules.  
\end{proof}

\begin{thm}
An acyclic graph $G$ of order $n\ge 2$ has $b^\bullet(G) = 0$ and therefore, $\mathfrak{E}(G) = 0$.
\end{thm}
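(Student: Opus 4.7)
The plan is to exhibit a single orientation $\varphi(G)$ achieving $b^{\varphi(G)}(G)=0$. Since $b^\bullet(G)$ is by definition the minimum of $b^{\varphi(G)}(G)$ over all orientations, this immediately forces $b^\bullet(G)=0$, and then $\mathfrak{E}(G)=0$ follows from the corollary stating $b^\bullet(G)=0 \Leftrightarrow \mathfrak{E}(G)=0$.

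Because $G$ is connected (the standing convention of the paper) and acyclic of order $n\ge 2$, $G$ is a tree. Pick any vertex and label it $u_1$; extend this to a labeling $u_2,u_3,\ldots,u_n$ via a breadth-first (or depth-first) traversal rooted at $u_1$, so that every non-root vertex receives a larger index than its parent in the traversal tree. Now orient every edge from parent to child. This defines an energy graph $\varphi(G)$ with unique source $u_1$, and since a tree has a unique path between any two vertices, each non-root vertex $u_j$ has exactly one in-arc, namely the arc from its parent. Hence $d^-_G(u_j)=1$ for every $j\ge 2$.

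With this in hand I apply Lemma \ref{Lem-3.2} vertex by vertex. For each $j\ge 2$ the stringed arrival time at $u_j$ contains only one entry, so the parameter $\ell$ of the lemma equals $d^-_G(u_j)=1$, giving $d^-_G(u_j)-\ell = 0$ black in-arcs at $u_j$. Summing over $j$ yields $b^{\varphi(G)}(G)=0$, so $b^\bullet(G)=0$ and consequently $\mathfrak{E}(G)=0$.

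There is no substantive obstacle here: the argument is purely constructive, and the only genuine content is the observation that a connected acyclic graph admits a rooted-out orientation in which every non-source in-degree is forced to $1$, which is precisely the degenerate case of Lemma \ref{Lem-3.2} in which no late-arriving particle exists to produce a black arc.
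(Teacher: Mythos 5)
Your proof is correct, but it takes a different route from the paper's. The paper observes that an acyclic graph is bipartite, partitions $V(G)$ into $X$ and $Y$, orients every edge from $X$ to $Y$, and labels $X$ before $Y$: every vertex of $X$ is then a source, every vertex of $Y$ a sink, and all atomic particles arrive simultaneously at $t=1$, so no black arc can exist. You instead use the fact that a connected acyclic graph is a tree, root it, orient edges away from the root via a BFS/DFS labeling, and note that every non-root vertex has in-degree exactly $1$, which is the degenerate case $\ell = d^-_G(u_j)$ of Lemma \ref{Lem-3.2}. Both constructions are valid exhibits of an orientation with zero black arcs, and both correctly conclude $b^\bullet(G)=0$ from the definition of $b^\bullet$ as a minimum. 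The trade-off is worth noting: the paper's bipartite orientation is the one that is reused verbatim to prove the subsequent corollary that \emph{any} graph without an odd cycle has $b^\bullet(G)=0$, since bipartiteness (not tree structure) is what that generalization needs; your rooted-tree argument does not extend to that corollary, because a bipartite graph with cycles has vertices of in-degree greater than one under any orientation away from a root. On the other hand, your argument makes the mechanism explicit through Lemma \ref{Lem-3.2} and keeps a single source vertex, which is closer in spirit to the propagation model as it is applied to Jaco-type graphs elsewhere in the paper.
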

\begin{proof}
An acyclic graph $G$ is a bipartite graph. Partition $V(G)$ into disjoint subsets $X,Y$ and without loss of generality, orientate each edge $uw$, $u\in X$, $w \in Y$ to be the arc $(u,w)$. Label the vertices in $X$ as $u_1,u_2,u_3,\ldots,u_{|X|}$ and those in $Y$ as $w_{|X|+1}, w_{|X|+2},w_{|X|+3},\ldots,w_{|X|+|Y|}$. Clearly, the resultant vertex labeled digraph $G$ is an energy graph. Furthermore, at $t=1$ all atomic particles transit along an arc from a corresponding source vertex in $X$ to a sink vertex in $Y$. Therefore,  $b^\bullet(G) = 0$. It immediately follows that $\mathfrak{E}(G) = 0$.
\end{proof} 

\begin{cor}
A simple connected graph $G$ of order $n\ge 2$ which has no odd cycle has $b^\bullet(G) = 0$ and therefore, $\mathfrak{E}(G) = 0$. 
\end{cor}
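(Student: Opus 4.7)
The plan is to reduce the corollary to the preceding theorem by invoking the classical characterisation that a simple graph contains no odd cycle if and only if it is bipartite. Once bipartiteness is in hand, the orientation used in the theorem above carries over verbatim with nothing more to do than reinterpret the two colour classes as the source and sink sides.

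First I would fix a bipartition $V(G) = X \cup Y$ and orient every edge $uw$ with $u \in X$ and $w \in Y$ as the arc $(u,w)$. Labelling the vertices of $X$ consecutively as $u_1, u_2, \ldots, u_{|X|}$ and those of $Y$ as $u_{|X|+1}, u_{|X|+2}, \ldots, u_n$ ensures both that $(u_i, u_j) \notin A(G)$ whenever $i > j$ and that every vertex in $X$ has in-degree $0$ while every vertex in $Y$ has out-degree $0$. Hence the resulting orientation $\varphi(G)$ renders $G$ an energy graph whose source set is $X$ and whose sink set is $Y$.

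Next I would trace the propagation. Since each arc runs directly from a source to a sink, every atomic particle traverses exactly one arc, so the stringed arrival time at each vertex $u_j \in Y$ is $u_j \sim \langle 1, 1, \ldots, 1\rangle$ with $d^-_G(u_j)$ equal entries. Applying Lemma \ref{Lem-3.2} with $\ell = d^-_G(u_j)$ yields $d^-_G(u_j) - \ell = 0$ black in-arcs at $u_j$. Summing over all $u_j \in Y$ gives $b^{\varphi(G)}(G) = 0$, hence $b^\bullet(G) = 0$, and the preceding corollary $b^\bullet(G) = 0 \Leftrightarrow \mathfrak{E}(G) = 0$ then delivers $\mathfrak{E}(G) = 0$.

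I do not anticipate a serious obstacle. The only genuinely new ingredient is the passage from ``no odd cycle'' to ``bipartite,'' which is a standard graph-theoretic fact. Everything that follows mirrors the acyclic-graph proof word for word, because that proof never used acyclicity beyond extracting bipartiteness from it; replacing ``acyclic'' by ``bipartite'' throughout requires no modification of the propagation analysis.
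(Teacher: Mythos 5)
Your proof is correct and follows essentially the same route as the paper: the paper's own proof simply cites the standard fact that a graph with no odd cycle is bipartite and then refers back to the bipartite orientation constructed in the proof of the acyclic-graph theorem, which is exactly the construction you spell out. Your version merely makes explicit the arrival-time string $\langle 1,1,\ldots,1\rangle$ and the appeal to Lemma \ref{Lem-3.2}, which the paper leaves implicit.
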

\begin{proof}
It is well known that a graph which has no odd cycle is bipartite. The result then follows from the proof of Theorem 2.4. 
\end{proof}
An energy graph $G$ with $\mathfrak{E}(G) = 0$, ($b^\bullet(G) = 0$) is said to satisfy the \textit{law of conservation of total energy}. Characterisation of these graphs follows in the next result.
\begin{thm}
An energy graph $G$ of order $n\ge 2$ satisfies the law of conservation of total energy if and only if $G$ has no odd cycle.
\end{thm}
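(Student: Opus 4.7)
The plan is to note that the backward implication (no odd cycle $\Rightarrow$ conservation) has already been established in Corollary 2.5, so only the forward direction needs real work: if $\mathfrak{E}(G) = 0$, equivalently $b^\bullet(G) = 0$, then $G$ has no odd cycle.

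For this, I would start from an orientation $\varphi(G)$ that realises $b^{\varphi(G)}(G) = 0$ and extract from it a proper $2$-colouring of the underlying graph. The main tool is a level function: for each $v \in V(G)$ define $f(v) = \min\{d^\rightarrow_G(s,v) : s \text{ is a source of } \varphi(G)\}$. Since $\varphi(G)$ is an energy graph, I can show $f(v)$ is a well-defined non-negative integer for every $v$ by tracing in-arcs backwards through the topological order: any such walk in the finite DAG must terminate at a vertex of in-degree zero, i.e.\ a source, producing a directed source-to-$v$ path.

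The key step is to show that in a zero-black-arc orientation, every arc $(u, v)$ of $\varphi(G)$ satisfies $f(u) + 1 = f(v)$. The characterisation given in Section~1.1 says a non-black arc lies on a minimum directed distance path from some source to its head. So if $(u, v)$ is on a shortest $s$-to-$v$ path, then $f(v) = d^\rightarrow_G(s,v) = d^\rightarrow_G(s,u) + 1 \geq f(u) + 1$; the reverse inequality $f(v) \leq f(u) + 1$ is automatic by concatenating any shortest source-to-$u$ path with the arc $(u, v)$. Hence equality holds for every arc of $\varphi(G)$.

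Once this tight level structure is in place, the bipartition is immediate: set $X = \{v : f(v) \text{ is even}\}$ and $Y = \{v : f(v) \text{ is odd}\}$, and observe that every arc of $\varphi(G)$, hence every edge of the underlying graph $G$, has one endpoint in $X$ and the other in $Y$. A bipartite graph contains no odd cycle, which completes the proof. I expect the main obstacle to be checking carefully that the non-black hypothesis really forces $f(u)+1 = f(v)$ on \emph{every} arc (not merely on some distinguished in-arc of each vertex); once that tightness is locked down, the parity partition finishes the argument in one line.
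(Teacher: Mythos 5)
Your proposal is correct, and for the forward direction it takes a genuinely different route from the paper. The paper handles that direction by contradiction: it assumes $\mathfrak{E}(G)=0$ together with the presence of an odd cycle, and then invokes the earlier proposition on $b^\bullet(C_n)$ for odd $n$ to claim that the odd cycle, ``as an induced subgraph of $G$,'' must contribute at least one black arc. That transfer is the weak point of the paper's argument: the proposition was proved for a cycle propagating in isolation with its own source, whereas inside $G$ the cycle's vertices receive energy at times dictated by the whole graph, so the claim that some arc of the cycle must be black in $G$ is asserted rather than proved. Your argument avoids this entirely by proving the contrapositive constructively: taking an orientation with $b^{\varphi(G)}(G)=0$, defining the first-arrival level $f(v)=\min\{d^{\rightarrow}_G(s,v): s \text{ a source}\}$, and observing that the non-black condition forces $f(v)=f(u)+1$ on \emph{every} arc (the inequality $f(v)\le f(u)+1$ being automatic), after which the parity classes of $f$ give a proper $2$-colouring and hence bipartiteness. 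This is tighter and more self-contained than the paper's proof; what the paper's approach buys is brevity and reuse of the cycle proposition, at the cost of the unjustified localisation step. One small point worth making explicit in your write-up: the theorem is only true under the reading that the law of conservation refers to $b^\bullet(G)=0$, i.e.\ the minimum over orientations (a bipartite graph can have black arcs under a badly chosen orientation, e.g.\ $C_6$ oriented as a directed path $u_1u_2\ldots u_6$ plus the arc $(u_1,u_6)$); your choice of a minimising orientation $\varphi(G)$ is consistent with the paper's definition, but you should state that you are using that interpretation.
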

\begin{proof}
Case 1: If $G$ has no odd cycle the result follows from Corollary 2.6.

\vspace{0.25cm}

Case 2: Assume $G$ satisfies the law of conservation of total energy and has an odd cycle. Hence, $\mathfrak{E}(G) = 0$. However, since $m>0$ and $\xi > 0$ and order $n \in \N$ is finite, a vertex of the odd cycle receives or propagates some energy $> 0$ joules from in-arcs or along out-arcs, respectively. From Proposition 2.4(iii) it follows that for all arcs along the odd cycle, at least one such arc $(u_i,u_j)$ is a black arc in respect of the odd cycle as an induced subgraph of $G$. Thus, at least $(\frac{1}{2}mc^2 +\epsilon)$ joules dissipate as black energy along the arc $(u_i,u_j)$ during exhaustive propagation and $b^\bullet(G) > 1$. The aforesaid is a contradiction to the assumption. Therefore,  the result holds.
\end{proof}

The Jaco-type graphs $J_n(s_1)$, $J_n(s_2)$ clearly have well-defined black arcs and therefore it is possible to determine the number of black arcs. The black arc number can be determined through the \textit{Jaco-type Black Arc Algorithm} which is presented next.

\begin{defn}{\rm  In a finite Jaco-type graph $G$ of order $n\ge 2$ and a vertex $u_j \in V(G)$ the out-open neighbourhood is the set of head vertices of $u_j$, denoted $\mathcal{H}(u_j)$. The set of vertices of the induced subgraph $\langle \mathcal{H}(u_j)\rangle$ is called the black cloud of $u_j$. 
}\end{defn}

\subsection{Jaco-type Black Arc Algorithm}

Note that the algorithm is dense and requires iterative refinement for ICT application.

\ni \textit{Step 1}:- For a finite Jaco-type graph $G$, set $i= 1$, $\cB_0(G)= \emptyset$ and let $G_i = G$. Go to Step 2.

\vspace{0.25cm}

\ni \textit{Step 2}:- Set $j=i$ and consider vertex $u_j \in V(G_j)$ and determine $\langle \mathcal{H}(u_j)\rangle$. Go to Step 3.

\vspace{0.25cm}

\ni \textit{Step 3}:- Let $\cB_j(G) =\cB_{j-1}(G) \cup E(\langle\mathcal{H}(u_j)\rangle)$. If $j= n-1$, go to Step 4. Else, let $i=j+1$ and $G_i = G_j - E(\langle\mathcal{H}(u_j)\rangle)$. Go to Step 2.

\vspace{0.25cm}

\ni \textit{Step 4}:- Set $b^\bullet(G) = |\cB_{n-1}(G)|$ and exit.
  
\vspace{0.25cm}
 
We call the set $\cB_{n-1}(G)$ the \textit{black cloud} of the graph $G$. Note that $b^\bullet(G) = |\cB_{n-1}(G)| \le |\bigcup\limits_{j=1}^{n}E(\langle \mathcal{H}(u_j)\rangle)|$. 

\begin{thm}
For a finite Jaco-type graph $G$ the Jaco-type Black Arc Algorithm is well-defined and it converges.
\end{thm}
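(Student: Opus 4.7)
The plan is to verify the two required properties separately by induction on the iteration counter $j$. The key structural observation is that every operation performed by the algorithm on $G_j$ is an arc deletion, so $V(G_j) = V(G)$ holds throughout the execution; consequently every vertex $u_j$ with $1 \le j \le n-1$ remains available when its turn comes to be processed.

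For well-definedness, I would establish by induction on $j$ that (a) the digraph $G_j$ is a well-defined subdigraph of $G$ on the same vertex set $V(G)$, (b) the out-open neighbourhood $\mathcal{H}(u_j)$ taken in $G_j$ is a specific (possibly empty) subset of $V(G)$, (c) the induced subgraph $\langle \mathcal{H}(u_j)\rangle$ and its arc set $E(\langle \mathcal{H}(u_j)\rangle) \subseteq A(G_j) \subseteq A(G)$ are unambiguously determined, and (d) the union $\mathcal{B}_j(G) = \mathcal{B}_{j-1}(G) \cup E(\langle \mathcal{H}(u_j)\rangle)$ is a well-defined subset of $A(G)$. The base case $j=1$ is immediate from Step 1, where $G_1 = G$ and $\mathcal{B}_0(G) = \emptyset$. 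For the inductive step, $G_{j+1} = G_j - E(\langle \mathcal{H}(u_j)\rangle)$ removes only arcs, so it remains a digraph on $V(G)$, and the next vertex $u_{j+1}$ exists because $j+1 \le n-1 < n$. Degenerate situations cause no trouble: if $\mathcal{H}(u_j) = \emptyset$ or $|\mathcal{H}(u_j)| = 1$, then $E(\langle \mathcal{H}(u_j)\rangle) = \emptyset$ and $\mathcal{B}_j(G) = \mathcal{B}_{j-1}(G)$.

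For convergence, I would observe that Steps 2--3 form a loop whose counter $j$ is strictly increased by one at each pass through the assignment $i = j+1$ followed by $j = i$. Starting from $j=1$, the termination test $j = n-1$ is therefore triggered after exactly $n-1$ passes, at which point Step 4 outputs the non-negative integer $b^\bullet(G) = |\mathcal{B}_{n-1}(G)|$. Since $n$ is finite by hypothesis and $\mathcal{B}_{n-1}(G) \subseteq A(G)$ is also finite, the algorithm halts in finitely many steps with a finite, unambiguous output.

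The only potential obstacle is ensuring that arc deletions performed in early iterations do not compromise the later iterations by shrinking a neighbourhood in an unintended way. This concern is resolved immediately by the fact that deletion preserves the vertex set, so every $u_{j+1}, u_{j+2}, \ldots, u_{n-1}$ still exists in every subsequent $G_{j+1}, G_{j+2}, \ldots$; the out-neighbourhood $\mathcal{H}(u_{j+1})$ is simply recomputed in the current digraph $G_{j+1}$, with a possibly reduced arc set. Hence the induction goes through and the algorithm is both well-defined and convergent.
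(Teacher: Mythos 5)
Your proof is correct and takes essentially the same route as the paper's: both arguments check that each step of the algorithm is unambiguous (the out-neighbourhood and induced subgraph $\langle \mathcal{H}(u_j)\rangle$ being well-defined in a finite simple digraph) and that the loop terminates because the counter $j$ increases monotonically to the finite bound $n-1$. Your inductive formulation, with the explicit observation that arc deletions preserve the vertex set so every $u_j$ remains available, is in fact a more careful rendering of the paper's rather terse argument.
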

\begin{proof}
Clearly, Step 1 is unambiguous and finite. Clearly, Step 2 is unambiguous and since $G$ is finite the range $j \le n-1$ in Step 3 is well-defined and finite which implies Step 2 converges. Furthermore, since the out-neighbourhood of any vertex is well-defined and arcs are unambiguous in a simple digraph, the induced subgraph $\langle \mathcal{H}(u_j)\rangle$ is well-defined. Since $G_j$ is finite, determining $\langle \mathcal{H}(u_j)\rangle$ converges. Therefore,  Step 3 converges. Step 4 is unambiguous and finite. Hence, the Jaco-type Black Arc Algorithm is well-defined and it converges.
\end{proof}

\begin{thm}
The Jaco-type Black Arc Algorithm determines all the black arcs of a finite Jaco-type graph $G$ of order $n\ge 2$.
\end{thm}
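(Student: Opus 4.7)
The natural plan is to establish the theorem by a double-inclusion argument: every arc placed into $\cB_{n-1}(G)$ by the algorithm must qualify as a black arc of $G$ (soundness), and conversely every black arc of $G$ must appear in $\cB_{n-1}(G)$ (completeness). I would carry out both parts by induction on the iteration index $j$, relying at each step on Lemma \ref{Lem-3.2}, which classifies the black in-arcs of a vertex through its stringed arrival times.

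For soundness, at iteration $j$ the algorithm augments $\cB_{j-1}(G)$ by $E(\langle\mathcal{H}(u_j)\rangle)$. Any arc inserted has the form $(u_k,u_l)$ with $j<k<l$ and with both $(u_j,u_k),(u_j,u_l)\in A(G)$. The direct particle from $u_j$ along $(u_j,u_l)$ therefore reaches $u_l$ at the time $t^{*}(u_j)+1$, while the particle along $(u_k,u_l)$ cannot arrive before $t^{*}(u_k)+1\ge t^{*}(u_j)+1$ (using that $t^{*}$ is non-decreasing along the Jaco-type ordering). I would maintain the loop invariant that $G_j$ retains only those arcs whose ``lateness'' has not already been certified in an earlier iteration, whence Lemma \ref{Lem-3.2} applies cleanly and identifies $(u_k,u_l)$ as black.

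For completeness, let $(u_k,u_l)$ be a black arc of $G$, so some vertex $u_j$ with $(u_j,u_l)\in A(G)$ lies on a strictly faster source-to-$u_l$ path than any path using $(u_k,u_l)$. Exploiting the Jaco-type arc condition $i+a_i\ge j$ and its monotonicity, together with the non-decreasing behaviour of $t^{*}$, I would force $j<k$ and therefore $j+a_j\ge l>k$, giving $(u_j,u_k)\in A(G)$. Hence $u_k\in\mathcal{H}(u_j)$, the induced subgraph $\langle\mathcal{H}(u_j)\rangle$ contains the arc $(u_k,u_l)$ at iteration $j$, and the algorithm inserts it into $\cB_j(G)$ (or has already done so at a previous iteration, in which case $\cB_{j-1}(G)$ witnesses the claim).

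The delicate core of the argument is the case in which an out-neighbour $u_k$ of $u_j$ satisfies $t^{*}(u_k)=t^{*}(u_j)$, so that $u_k$ fires simultaneously with $u_j$ and the arc $(u_k,u_l)$ may in fact lie on a genuine shortest path. The edge-removal rule $G_{j+1}=G_j-E(\langle\mathcal{H}(u_j)\rangle)$ is intended to neutralise precisely this situation, and its correctness rests on the loop invariant that $G_j$ contains exactly the arcs not yet shown to be shortcut arcs. Verifying this invariant, and thereby excluding both over-counting and under-counting, is where I expect the main technical difficulty to lie; in particular the base case $j=1$ should be straightforward, and the induction step should follow once the invariant is shown to be preserved by the simultaneous update $\cB_j=\cB_{j-1}\cup E(\langle\mathcal{H}(u_j)\rangle)$ and $G_{j+1}=G_j-E(\langle\mathcal{H}(u_j)\rangle)$.
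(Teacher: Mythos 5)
Your proposal is a proof plan rather than a proof: you name the two inclusions, you name the loop invariant, and then you explicitly defer its verification (``is where I expect the main technical difficulty to lie''). That deferred step is not a routine detail; it is the entire content of the theorem. Concretely, your soundness argument derives only the non-strict inequality $t^{*}(u_k)+1\ge t^{*}(u_j)+1$, and Lemma~\ref{Lem-3.2} declares an in-arc black only when its particle arrives \emph{strictly} later than the first arrivals; in the boundary case $t^{*}(u_k)=t^{*}(u_j)$ the particle along $(u_k,u_l)$ may arrive simultaneously with the one along $(u_j,u_l)$, and then $(u_k,u_l)$ is not black. Ruling this out requires exactly the invariant you postpone, namely that at the start of iteration $j$ every black arc $(u_a,u_b)$ whose earliest in-neighbour index $m(b)$ is less than $j$ has already been removed, and no non-black arc has been removed --- which in turn forces any surviving out-neighbour $u_k$ of $u_j$ in $G_j$ to satisfy $t^{*}(u_k)=t^{*}(u_j)+1$. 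The completeness direction has the mirror-image dependence: you need that the arcs $(u_j,u_k)$ and $(u_j,u_l)$ witnessing $u_k,u_l\in\mathcal{H}(u_j)$ have not themselves been deleted earlier, which again is the invariant. So the two inclusions must be proved by a single simultaneous induction on $j$; as written, each half silently assumes the other.

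For comparison, the paper's own proof is even shorter than your sketch: it only records the soundness observation that if $u_k,u_\ell$ are both out-neighbours of $u_i$ and adjacent, then energy routed through $u_k$ reaches $u_\ell$ one time unit after the direct transmission from $u_i$, and it asserts completeness without argument. Your outline is therefore more ambitious and more honest about where the difficulty sits, and the structural facts you invoke are the right ones --- in particular the interval property of Jaco-type arcs ($(u_j,u_l)\in A(G)$ and $j<k<l$ imply $(u_j,u_k)\in A(G)$) and the monotonicity of the first-arrival time $t^{*}$ along the vertex ordering, both of which you should prove rather than assume. But until the loop invariant is actually established by induction, the proposal does not yet constitute a proof of the theorem.
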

\begin{proof}
At any time $t_i$ which corresponds with the $i^{th}$-iteration the vertex $u_i \in V(G_i)$ is under consideration. If any two out-neighbours of $u_i$ say vertices $u_k,u_\ell$, $k<\ell$ are adjacent the arc $(u_k,u_\ell)$ exists. No energy transmission will be possible from $u_k$ to $u_\ell$ at time $t_{i+1}$ hence, all energy allocated at vertex $u_k$ will dissipate into black energy at $t_{i+1}$. Therefore,  the Jaco-type Black Arc Algorithm determines all black arcs of $G$. 
\end{proof}

\begin{app}{\rm 
Applying the Jaco-type Black Arc Algorithm on $J_8(s_1)$ and $J_{12}(s_2)$, we have  

\begin{eqnarray*}
\cB_7(J_8(s_1)) & = & \{(u_3,u_4),(u_5,u_6),(u_5,u_7),(u_5,u_8),(u_6,u_7),(u_6,u_8),(u_7,u_8)\}, \\
\cB_{11}(J_{12}(s_2)) & = & \{(u_4,u_5),(u_6,u_7),(u_6,u_8),(u_6,u_9),(u_6,u_{10}),(u_7,u_8),(u_7,u_9),\\ & & (u_7,u_{10}), (u_8,u_9),(u_8,u_{10}),(u_9,u_{10}), (u_{11},u_{12})\}
\end{eqnarray*}

\ni Therefore,  $b^\bullet(J_8(s_1)) = 7$, $b^\bullet(J_{12}(s_2)) = 12$.}
\end{app}

The notion of primitive holes in graphs has been introduced in \cite{KS1} and defined the primitive degree, $d^p_G(u)$ of a vertex $u\in V(G)$. Let the graphs $G_i$, $1\le i\le n-1$ be those resulting from the Jaco-type Black Arc Algorithm. Denote the underlying graph of $G$ by $G^*$. The next theorem presents $b^\bullet(G)$ in terms of primitive degrees found in Jaco-type graphs.

\begin{thm}
For a finite Jaco-type graph $G$ of order $n\ge 2$ we have $b^\bullet(G) = \sum\limits_{i=1}^{n-1}h^p_{G^*_i}(u_i)$.
\end{thm}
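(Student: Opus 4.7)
The plan is to leverage Theorem 2.10 and translate the arcs harvested by the Jaco-type Black Arc Algorithm into a statement about primitive holes. First, I would observe that, by construction of Step 3, the edge batches $E(\langle \mathcal{H}(u_i)\rangle)$ collected at successive iterations are pairwise disjoint — each newly identified batch is excised from $G_i$ before the next iteration begins — so
\[
b^\bullet(G) = |\cB_{n-1}(G)| = \sum_{i=1}^{n-1} \bigl|E(\langle \mathcal{H}(u_i)\rangle_{G_i})\bigr|.
\]
The theorem therefore reduces to verifying that $|E(\langle \mathcal{H}(u_i)\rangle_{G_i})| = h^p_{G^*_i}(u_i)$ for every $i$.

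Next I would set up a bijection on the right-hand side. Each arc $(u_k,u_\ell)\in E(\langle \mathcal{H}(u_i)\rangle_{G_i})$ satisfies $i<k<\ell$ with both $u_k,u_\ell$ adjacent to $u_i$ in $G^*_i$, so $\{u_i,u_k,u_\ell\}$ induces a primitive hole of $G^*_i$ whose tail (minimum-indexed vertex) is $u_i$; conversely, every primitive hole of $G^*_i$ having $u_i$ as tail contributes exactly one such arc among the out-neighbours of $u_i$. Hence $|E(\langle \mathcal{H}(u_i)\rangle_{G_i})|$ coincides with the number of primitive holes of $G^*_i$ with $u_i$ as tail.

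The main obstacle will be upgrading this tail-counted quantity to the full primitive degree $h^p_{G^*_i}(u_i)$, i.e., showing that every primitive hole through $u_i$ in $G^*_i$ in fact has $u_i$ as its tail. I would argue this by contradiction: if some primitive hole through $u_i$ contained a vertex $u_a$ with $a<i$, then $u_a$ would be the true tail, and the remaining two vertices (including $u_i$) would be out-neighbours of $u_a$ in the original graph. Since all three edges of the triangle lie in $G_i \subseteq G_a$, those two vertices sit in $\mathcal{H}(u_a)$ at iteration $a$, and the arc between them is an edge of $\langle \mathcal{H}(u_a)\rangle_{G_a}$. Step 3 then places this arc in $\cB_a(G)$ and deletes it in forming $G_{a+1}$, contradicting its survival into $G_i$. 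Once this exclusion is in place, $h^p_{G^*_i}(u_i)$ matches the tail-indexed count, the per-$i$ identities line up, and summing over $i=1,\ldots,n-1$ delivers the theorem.
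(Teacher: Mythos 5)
Your proposal is correct and follows the same route as the paper: identify each arc of $E(\langle\mathcal{H}(u_i)\rangle_{G_i})$ with a primitive hole (triangle) of $G^*_i$ having $u_i$ as its common vertex, and sum over $i$ using the disjointness of the batches deleted by the algorithm. In fact your treatment is more complete than the paper's brief proof, which merely asserts that the count of arcs in the black cloud of $u_i$ equals the primitive degree; your contradiction argument showing that every primitive hole of $G^*_i$ through $u_i$ must have $u_i$ as its minimum-indexed vertex (since any hole containing a smaller-indexed vertex $u_a$ would have had the edge between its other two vertices deleted already at iteration $a$) closes a step the paper leaves implicit.
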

\begin{proof}
Lemma 2.1 states that a Jaco-type graph is an energy graph. From definitions 2.1 and 2.2 it follow directly that each arc of the black cloud $\langle \mathcal{H}(u_i)\rangle$ is an edge of a primitive hole of $G^*_i$ with the common vertex $u_i$. So the number of black edges associated with $u_i \in V(G^*_i)$ equals $d^P_{G^*}(u_i)$. Hence, the result $b^\bullet(G) = \sum\limits_{i=1}^{n-1}h^p_{G^*_i}(u_i)$ is settled.
\end{proof}

The subgraph $G- \bigcup\limits_{i=1}^{n-1}E(\langle \mathcal{H}(u_i)\rangle) = G-\cB_{n-1}(G)$ is called the \textit{solid subgraph} of the propagating graph $G$, denoted $G^s$. For a given energy graph $G$ (given orientation), the corresponding solid subgraph is unique. We now have a useful lemma, with trivial proof.

\begin{lem}
The number of arcs, $|\cB_{n-1}(G)|$ equals the number of atomic particles which dissipated into mass-energy equivalence when propagation exhausted. Similarly the number of arcs, $|A(G^s)|$ equals the number of atomic particles which remained within the energy graph when propagation exhausted.
\end{lem}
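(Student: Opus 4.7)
The plan is to establish a bijective correspondence between atomic particles and arcs of $G$, and then split this correspondence into the two groups claimed. First I would invoke the allocation rule from the propagation model: each vertex $u_i$ receives exactly $d^+(u_i)$ atomic particles, one earmarked for each of its out-arcs. Summing over all vertices gives a total of $\sum_{i=1}^{n} d^+(u_i) = |A(G)|$ particles in the system, with each particle uniquely labeled by the arc along which it initially transits. Although particles may merge at a head vertex during propagation, the merged particle of mass $\ell m$ can be viewed as carrying the labels of its $\ell$ constituents, so the count of original particles is preserved throughout propagation.

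For the first assertion, I would use the definition of a black arc together with Theorem 2.10. By definition, a particle that transits an in-arc of some $u_j$ failing to lie on a minimum directed-distance path from a source vertex arrives late and dissipates together with its mass-energy equivalence. Theorem 2.10 asserts that the Jaco-type Black Arc Algorithm recovers precisely the full set of black arcs as $\cB_{n-1}(G)$, so the particles that dissipate are in one-to-one correspondence with the arcs in $\cB_{n-1}(G)$, giving the required equality.

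For the second assertion, the definition $G^s = G - \cB_{n-1}(G)$ yields $|A(G^s)| = |A(G)| - |\cB_{n-1}(G)|$ immediately. Since every particle either dissipates along a black arc or survives (either capacitated at a sink or merged at an intermediate vertex during a perfect elastic collision), the number of surviving particles equals $|A(G)| - |\cB_{n-1}(G)|$, which matches $|A(G^s)|$.

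The only non-routine point is the bookkeeping under particle merging; once one agrees that a merged object of mass $\ell m$ represents $\ell$ surviving units, the lemma reduces to arithmetic, which is the main reason the author classifies the proof as trivial.
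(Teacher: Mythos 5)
Your proposal is correct and is essentially the accounting the paper leaves implicit when it declares the proof trivial: the allocation rule puts exactly one particle on each arc, so $|A(G)|=\sum_i d^+(u_i)$ particles partition into those transiting black arcs (which dissipate) and the rest, and $|A(G^s)|=|A(G)|-|\cB_{n-1}(G)|$ by the definition of the solid subgraph. Your explicit convention that a merged particle of mass $\ell m$ counts as $\ell$ surviving units is the right one --- it is exactly what makes the paper's worked example $16-7=9$ for $J_8(s_1)$ come out --- so nothing is missing.
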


\begin{app}{\rm 
For the Jaco-type graph $J_8(s_1)$, the number of atomic particles remained is $|A(J_8(s_1))|-b^\bullet(J_8(s_1))=16-7=9=|A(J^s_8(s_1))|$ only and the number of atomic particles dissipated is $7$. In $J_{12}(s_2)$,  only $|A(J_{12}(s_2))|- b^\bullet(J_{12}(s_2))=33-12=21=|A(J^s_{12}(s_2))|$ atomic particles remained and $12$ dissipated.}
\end{app}

\section{Application to Binary Code, Gray Code and Modular Arithmetic Jaco-type Graphs}

For a binary code of bit width $n$, the the binary combinations are always finite and an even number of combinations (that is, $2^n$ numbers) exist. The binary combinations will be tabled such that consecutive rows maps to consecutive decimal representations $b_i = i-1$, where $ i = 1,2,3,\ldots,2^n$ of the binary code. 

\vspace{0.2cm}

Table 1 serves as an example of the convention for a binary code of \textit{bit width} $3$.

\begin{table}[h!]\label{Tab-1}
\begin{center}
\begin{tabular}{|c|c|c|c|}
\hline
0  &  0  &  0  &  $\mapsto 0= b_1$\\
\hline
0 & 0 & 1 & $\mapsto 1 = b_2$\\
\hline
0 &1 & 0 & $\mapsto 2 = b_3$\\
\hline
0 & 1 & 1 & $\mapsto 3 = b_4$\\
\hline
1 & 0 & 0 & $\mapsto 4 = b_5$\\
\hline
1 & 0 & 1 & $\mapsto 5 = b_6$\\
\hline
1 & 1 & 0 & $\mapsto 6 = b_7$\\
\hline
1 & 1 & 1 & $\mapsto 7 = b_8$\\
\hline
\end{tabular}
\caption{}
\end{center}
\end{table}

Contrary to the general finite Jaco-type graphs $J_n(\{a_n\}),\ n \in \N$ the graphical embodiment of the binary codes denoted, $\mathcal{G}_n$ for $n \in \N$ is defined finite on exactly $2^{n+1}$ vertices, $u_1, u_2, u_3,\ldots,u_{2^n},u_{2^n +1},u_{2^n + 2},\ldots, u_{2^{n+1}}$. We define $d^+(u_i) = b_i + 1$, $1\le i \le 2^n$ and $d^+(u_j) = 2^n-k$, $j = 2^n + k$, $k= 1,2,3,\ldots,2^n$.  

It follows easily that $\mathcal{G}_n \simeq J_{2^{n+1}}(s_1)$. See Figure \ref{fig:Fig-1} for $\mathcal{G}_2 \simeq J_8(s_1)$. The disconnected graph obtained from the union of $t \ge 2$ copies of $\mathcal{G}_n$ has vertices labeled such that the corresponding vertices in the $i^{th}$-copy, $i \ge 2$ are $ u_{(i-1)2^{n+1} +j}$, $2\le i \le t$, $1\le j \le 2^{n+1}$. Through easy sequential counting and indexing known results for the finite Jaco-type graph $J_{2^n+1}(s_1)$ can be extrapolated. For example, for $\bigcup\limits_{t-copies}\mathcal{G}_n$, $\mathcal{G}_n$ the graphical embodiment of the binary code with bit width $n$ we have $\J(\bigcup\limits_{t-copies}\mathcal{G}_n) = \{u_{p+j}: u_p \in \J(J_{2^{n+1}}(s_1)), j=0,1,2,\ldots,(t-1)2^{n+1}\}$; $|\J(\bigcup\limits_{t-copies}\mathcal{G}_n)| = t\cdot |\J(J_{2^{n+1}}(s_1))|$ and $b^\bullet(\bigcup\limits_{t-copies}\mathcal{G}_n) = t\cdot b^\bullet(J_{2^{n+1}}(s_1)$.

\vspace{0.2cm}

The analysis for Gray codes follows similarly to that for binary codes since the Gray combinations can be mapped onto exactly the same decimal representations.

\subsection{Jaco-type Graph for Sequences Modulo $k$}

It is well known that for $\N_0$ and $n,k\in \N$, $k\ge 2$ modular arithmetic allows an integer mapping in respect of mod $k$ as follows: $0\mapsto 0 =m_0$, $ 1\mapsto 1= m_1$, $ 2 \mapsto 2 =m_2,\ldots, k-1 \mapsto k-1 = m_{k-1}$, $k\mapsto 0 = m_k$, $k+1 \mapsto 1 =m_{k+1}, \ldots$ Let $s_3 =\{a_n\}$, $ a_n \equiv n($mod $k)= m_n$. Consider the infinite \textit{root}-graph $J_\infty(s_3)$ and define $d^+(u_i) = m_i$, for $i = 1,2,3,\ldots$.

From the aforesaid definition it follows that the case $k=1$ will result in a null (edgeless) Jaco-type graph $\forall\, n \in \N$. For the case $k=2$ the Jaco-type graph for $n\ge 2$ and even, is the union of $\frac{n}{2}$ copies of directed $P_2$. For the case $k=3$ the Jaco-type graph is a directed tree hence, an acyclic graph $G$ and therefore $b^\bullet(G)=0$. The smallest mod $k$ Jaco-type graph which has an black arc is $J_4(s_3)$, $k=4$. 

\vspace{0.2cm}

\ni Figure \ref{fig:Fig-3} depicts $J_{12}(s_3)$ for $k=5$.

\begin{figure}[h!]
\centering
\includegraphics[width=0.75\linewidth]{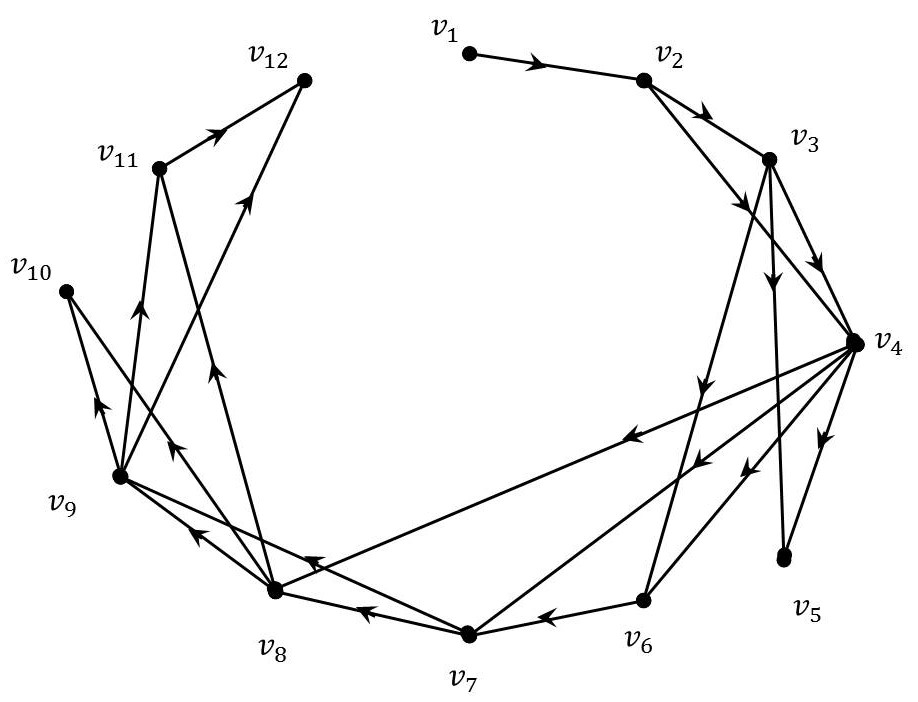}
\caption{$J_{12}(s_3),\ k = 5$}
\label{fig:Fig-3}
\end{figure}

We note that a Jaco-type graph $J_n(s_3)$ has $\lceil\frac{n}{k}\rceil$ sink vertices. The sink vertices resulting from the modular function is called \textit{mod-sink vertices}. It follows from the Jaco-type Black Arc Algorithm that $b^\bullet(J_{12}(s_3)) = 5$. A total of $(\frac{7}{24}\xi + 2mc^2)$ joules, $(\frac{1}{24}\xi + mc^2)$ joules and $(\frac{1}{12}\xi + 2mc^2)$ joules reach the sink vertices $v_5, v_{10}, v_{12}$ respectively. A total of $(\frac{47}{72}\xi + 6mc^2)$ joules dissipate as black energy along black arcs. Therefore, a total of $(\frac{1}{12}\xi + 14mc^2)$ joules are capacitated within the graph.
\begin{lem}
A mod-sink vertex $u_j$ of the Jaco-type graphs $J_n(s_3)$, $k\ge 3$ has $d^-(u_j) = \lfloor\frac{k}{2}\rfloor$.
\end{lem}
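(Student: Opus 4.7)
The plan is to reduce the statement to a short counting argument inside a single residue class modulo $k$. A mod-sink arises precisely when $d^+(u_j)=m_j=0$, which forces $j \equiv 0 \pmod k$, so I would start by writing $j = tk$ for some $t \ge 1$. Then $d^-(u_j)$ is, by Definition \ref{Defn-1.1}, the number of indices $i$ with $1 \le i < j$ satisfying $i + a_i \ge j$, where $a_i = m_i = i \bmod k$. The whole proof will live in counting these indices.

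Next I would parametrise each candidate tail vertex as $i = qk + r$ with $0 \le r < k$ and $0 \le q \le t-1$ (excluding the pair $(q,r)=(0,0)$ because $i \ge 1$). Since $a_i = r$, the arc-defining inequality $i + a_i \ge j$ rewrites as
\[
qk + 2r \ge tk, \qquad \text{i.e.,} \qquad 2r \ge (t-q)k.
\]
Because $r < k$, we have $2r < 2k$, so the right-hand side forces $t - q \le 1$. This collapses the analysis to a single layer: only $q = t-1$ can contribute, and exactly the values $r$ with $r \ge \lceil k/2 \rceil$ work.

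The count of such $r$ in $\{0,1,\dots,k-1\}$ is $k - \lceil k/2 \rceil = \lfloor k/2 \rfloor$, which delivers the claim. I would close by checking that this count is consistent with the side constraints: when $t \ge 2$ we have $i = (t-1)k + r \ge k \ge 1$ automatically, and when $t = 1$ (the first mod-sink $u_k$) the constraint $i \ge 1$ costs nothing because $r \ge \lceil k/2 \rceil \ge 2$ for $k \ge 3$.

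The only genuine obstacle is bookkeeping rather than substance: being clean about the residue decomposition and making sure that the lower range $q \le t-2$ really contributes nothing (which follows directly from $2r < 2k \le (t-q)k$ whenever $t-q \ge 2$). A secondary point to verify is that the hypothesis $k \ge 3$ is used only to guarantee $\lceil k/2 \rceil \ge 2$, which is needed to dispose of the $(q,r)=(0,0)$ exclusion in the $t=1$ case; if $k=2$ the formula would still compute numerically, but the edge case $u_2$ behaves differently, which is why the statement restricts to $k \ge 3$.
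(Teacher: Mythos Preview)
Your argument is correct and rests on the same underlying idea as the paper's proof: both analyse the arc condition $i + a_i \ge j$ for a mod-sink $u_j$ and deduce that the admissible tails form an interval of length $\lfloor k/2\rfloor$ just below $j$. The paper's proof is a one-line sketch that names the minimum tail index (written there as $\lfloor n/2\rfloor$, apparently a typo for $\lceil k/2\rceil$ in the case $j=k$) and then says ``Hence, the result''; your residue decomposition $i = qk+r$ is simply the fleshed-out version of that computation, with the added virtue that it transparently handles every mod-sink $j = tk$, not only the first one.
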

\begin{proof}
Consider any mod-sink vertex $u_j$ of the Jaco-type graph $J_n(s_3)$, $n \ge k \ge 4$. Clearly, the minimum $t$ for which the arc $(u_t,u_j)$ exists is $\lfloor\frac{n}{2}\rfloor$. Hence, the result. 
\end{proof}
\textbf{For the case $k=4$, $4 \le n\le 35$:} Applying the Jaco-type Black Arc Algorithm iteratively leads to the next result.

\begin{prop}
For $s_3 = \{a_n\}$, $a_n \equiv n($mod $4)$ the Jaco-type graph $J_n(s_3)$ has:\\
(i)  $b^\bullet(J_n(s_3)) = \lfloor\frac{n}{2}\rfloor - 1$, alternatively;\\
(ii)\begin{equation*} 
b^\bullet(J_n(s_3)) =
\begin{cases}
i,  & \text {$\forall\, n= 3 +2i$, $i=0,1,2,\ldots$ } and,\\
i + 1,  &  \text {$\forall\, n > max\{\ell:\ell < n$ and $\ell = 3 + 2i$, $i=0,1,2,\ldots\}$}. 
\end{cases}
\end{equation*} 
\end{prop}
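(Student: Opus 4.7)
My approach is to apply the Jaco-type Black Arc Algorithm directly, exploiting the $4$-periodicity of the out-degree sequence $a_i = i \bmod 4$. In $J_n(s_3)$ each vertex $u_i$ has arcs to the $a_i$ consecutive successors $u_{i+1},\ldots,u_{i+a_i}$ (truncating at $u_n$). The key observation is that whether the iteration processing $u_i$ produces a black arc depends essentially only on $i \bmod 4$, so I would split the analysis into four cases.

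For $i \equiv 0 \pmod{4}$ the vertex $u_i$ is a (mod-)sink, and for $i \equiv 1 \pmod{4}$ its out-neighbourhood is a singleton; in both cases $\langle \mathcal{H}(u_i)\rangle$ is arc-free and no black arc is produced. For $i \equiv 2 \pmod{4}$ with $i+2 \leq n$, a ``window argument'' restricted to $m \in \{i-1, i-2, i-3\}$---the only prior iterations that could possibly remove an arc with both endpoints in $\{u_i, u_{i+1}, u_{i+2}\}$---uses the bounds $a_{i-1} = 1$, $a_{i-2} = 0$, $a_{i-3} = 3$ to rule out any such removal. Hence $\mathcal{H}(u_i) = \{u_{i+1}, u_{i+2}\}$ in $G_i$, and since $a_{i+1} = 3$ the arc $(u_{i+1}, u_{i+2})$ lies in $\langle \mathcal{H}(u_i)\rangle$, contributing exactly one black arc. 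For $i \equiv 3 \pmod{4}$ with $i+3 \leq n$, the vertex $u_{i-1}$ has $a_{i-1} = 2$ and $\mathcal{H}(u_{i-1}) = \{u_i, u_{i+1}\}$, so the arc $(u_i, u_{i+1})$ is removed at iteration $i-1$; the same window argument rules out any other prior removal affecting $(u_i, u_{i+2})$, $(u_i, u_{i+3})$, or $(u_{i+2}, u_{i+3})$. Thus $\mathcal{H}(u_i) = \{u_{i+2}, u_{i+3}\}$ in $G_i$ and, since $a_{i+2} = 1$, the arc $(u_{i+2}, u_{i+3})$ again contributes exactly one black arc.

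Summing across all vertices gives
\[
b^\bullet(J_n(s_3)) \;=\; |\{i : 2 \leq i \leq n-2,\ i \equiv 2 \pmod{4}\}| + |\{i : 3 \leq i \leq n-3,\ i \equiv 3 \pmod{4}\}|,
\]
which, after a routine case expansion across the four residues of $n$ modulo $4$, simplifies to $\lfloor n/2 \rfloor - 1$, establishing (i). For (ii), if $n = 3 + 2i$ is odd then $\lfloor n/2 \rfloor - 1 = i$, while if $n$ is even the largest $\ell < n$ of the form $3+2i$ is $n-1$, giving index $i = (n-4)/2$ and $i+1 = (n-2)/2 = \lfloor n/2 \rfloor - 1$; thus (i) and (ii) are equivalent.

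The main obstacle is the careful bookkeeping of arc removals, particularly in the $i \equiv 3 \pmod{4}$ case, where one must simultaneously verify (a) that $(u_i, u_{i+1})$ is removed at iteration $i-1$, (b) that no other arc out of $u_i$ is removed before iteration $i$, and (c) that the candidate black arc $(u_{i+2}, u_{i+3})$ still belongs to $G_i$. Each check reduces to an elementary inequality of the form $a_m \geq j - m$ for $m \in \{i-3, i-2, i-1\}$ and $j \in \{i, i+1, i+2, i+3\}$, so the verification is finite but must be carried out systematically.
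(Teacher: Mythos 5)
Your proposal is correct, and it takes a genuinely different route from the paper. The paper proves part (i) by induction in steps of two: it computes the base case $b^\bullet(J_4(s_3)) = b^\bullet(J_5(s_3)) = 1$ by running the algorithm, then argues that passing from the pair $(t, t+1)$ to $(t+2, t+3)$ increases the count by exactly one. The inductive step there is largely floor-function arithmetic and does not explicitly verify \emph{why} appending two vertices creates exactly one new black arc; your residue-class analysis is precisely the missing justification. Your direct approach classifies each iteration $i$ of the Jaco-type Black Arc Algorithm by $i \bmod 4$, shows that iterations with $i \equiv 0, 1 \pmod 4$ contribute nothing, and that iterations with $i \equiv 2 \pmod 4$ (when $i+2 \le n$) and $i \equiv 3 \pmod 4$ (when $i+3 \le n$) each contribute exactly one black arc, namely $(u_{i+1},u_{i+2})$ and $(u_{i+2},u_{i+3})$ respectively; the window argument bounding $m + a_m$ for $m \in \{i-1,i-2,i-3\}$ and $m \le i-4$ correctly rules out interference from earlier iterations, and the two families of black arcs have distinct residue signatures so there is no double counting. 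I checked your closed-form count against small cases ($n = 4$ through $10$) and it agrees with $\lfloor n/2 \rfloor - 1$ and with the paper's base computations. What your approach buys is an explicit description of the black cloud (which arcs are black, not just how many), at the cost of more bookkeeping; what the paper's induction buys is brevity, at the cost of an unproved increment claim. Your part (ii) argument matches the paper's.
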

\begin{proof}
\textit{Part (i):} It is trivially true that $b^\bullet(J_n(s_3)) \le b^\bullet(J_{n+1}(s_3))$. Applying the Jaco-type Black Arc Algorithm to both $J_4(s_3),J_5(s_3)$ results in the only black arc, $(u_3,u_4)$ for both thus, $b^\bullet(J_4(s_3)) = b^\bullet(J_5(s_3))=1 =\lfloor\frac{4}{2}\rfloor -1 = \lfloor\frac{5}{2}\rfloor -1$. Thus the result holds for the integer pair, $n = 4,5$. Assume the results holds for the integer pair $n=t, t+1$, $t\in \N$. Hence, $b^\bullet(J_t(s_3)) =\lfloor\frac{t}{2}\rfloor -1 = \lfloor\frac{t+1}{2}\rfloor -1 = b^\bullet(J_{t+1}(s_3))$.

\vspace{0.2cm}

Observe that if $t$ is even then $\lfloor\frac{t}{2}\rfloor -1 = \lfloor \frac{t+1}{2}\rfloor -1$. Since for, $t'' = t +2$ which could have been the initial even integer $\in \N$ by the induction assumption, we have that $\lfloor \frac{t''}{2}\rfloor -1 = (\lfloor \frac{t}{2}\rfloor +1) -1 \implies b^\bullet(J_{t''}(s_3)) = b^\bullet(J_t(s_3)) + 1 = b^\bullet(J_{t+1}(s_3)) + 1= \lfloor\frac{t''+1}{2}\rfloor\rfloor -1 = b^\bullet(J_{t''+1}(s_3))$. Hence the result holds for the integer pair $t''$ and $t''+1$ or put another way, it holds for the pair of Jaco-type graphs, $J_{t''}(s_3),J_{t''+1}(s_3)$ that $b^\bullet(J_{t''}(s_3))  = \lfloor \frac {t''}{2}\rfloor -1 = \lfloor \frac {t''+1}{2}\rfloor -1 = b^\bullet(J_{t''+1}(s_3))$. 
The general result $\forall\, n\ge 4$, $ n\in \N$ follows by mathematical induction.

\vspace{0.2cm}

\textit{Part (ii):} Alternatively:

\vspace{0.2cm}

\textit{Part(ii)(a):} $\forall\, n = 3 + 2i$, $ i=0,1,2\ldots$ it follows from Case (i) that $\lfloor \frac{3+2i}{2}\rfloor -1 = \lfloor \frac{3}{2} + \frac{2i}{2}\rfloor-1=i$.

\vspace{0.2cm}

\textit{Part (ii)(b):} Since $3+2i$, $i=0,1,2\ldots$ is odd, the integer $n > max\{\ell:\ell < n$ and $\ell = 3 + 2i$, $i=0,1,2,\ldots\}\Rightarrow n= \ell +1$ and even. Therefore,  from Case (i) it follows that $b^\bullet(J_n(s_3)) = i + 1$.
\end{proof}

If $k=5$, applying the Jaco-type Black Arc Algorithm iteratively for $n\ge 4$, we get the following table.

\begin{table}[h!]
\begin{center}
\begin{tabular}{|c|c|c|c|c|c|c|c|c|c|c|c|}
\hline
$n$ & 4 & 5 & 6 & 7 & 8 & 9 & 10 & 11 & 12 & 13 & 14 \\ 
\hline
$b^\bullet(J_n(s_3))$ & 1 & 1 & 1 & 2 & 3 & 3 & 4 & 5 & 5 & 6 & 6\\
\hline
\hline
$n$ & 15 & 16 & 17 & 18 & 19 & 20 & 21 & 22 & 23 & 24 & 25 \\ 
\hline
$b^\bullet(J_n(s_3))$ & 7 & 7 & 8 & 8 & 9 & 10 & 11 & 11 & 12 & 12 & 13 \\ 
\hline
\hline
$n$ & 26 & 27 & 28 & 29  & 30 & 31 & 32 & 33 & 34 & 35 & \\ 
\hline
$b^\bullet(J_n(s_3))$ & 14 & 14 & 15 & 15 & 16 & 16 & 17 & 18 & 18 & 19 & \\ 
\hline
\end{tabular}
\caption{}
\end{center}
\end{table}

\ni The black clouds are 
\begin{eqnarray*}
\cB_3(J_4(s_3)) & = & \{(u_3,u_4)\},\\
\cB_4(J_5(s_3)) & = & \{(u_3,u_4)\}, \\
\cB_5(J_6(s_3)) & = & \{(u_3,u_4)\}, \\
\cB_6(J_7(s_3)) & = & \{(u_3,u_4),(u_6,u_7)\}, \\
\cB_7(J_8(s_3)) & = & \{(u_3,u_4),(u_6,u_7),(u_7,u_8)\},\\ 
\cB_8(J_9(s_3)) & = & \{(u_3,u_4),(u_6,u_7),(u_7,u_8)\},\\ 
\cB_9(J_{10}(s_3)) & = & \{(u_3,u_4),(u_6,u_7),(u_7,u_8),(u_9,u_{10})\},\\ 
\cB_{10}(J_{11}(s_3)) & = & \{(u_3,u_4),(u_6,u_7),(u_7,u_8),(u_9,u_{10}),(u_9,u_{11})\},\\ \cB_{11}(J_{12}(s_3)) & = & \{(u_3,u_4),(u_6,u_7),(u_7,u_8),(u_9,u_{10}),(u_9,u_{11})\},\\ \cB_{12}(J_{13}(s_3)) & = & \{(u_3,u_4),(u_6,u_7),(u_7,u_8),(u_9,u_{10}),(u_9,u_{11}),(u_{12},u_{13})\},\\ \cB_{13}(J_{14}(s_3)) & = & \{(u_3,u_4),(u_6,u_7),(u_7,u_8),(u_9,u_{10}),(u_9,u_{11}),(u_{12},u_{13})\},\\ \cB_{14}(J_{15}(s_3)) & = & \{(u_3,u_4),(u_6,u_7),(u_7,u_8),(u_9,u_{10}),(u_9,u_{11}),(u_{12},u_{13}),(u_{14},u_{15})\},\\ \cB_{15}(J_{16}(s_3)) & = & \{(u_3,u_4),(u_6,u_7),(u_7,u_8),(u_9,u_{10}),(u_9,u_{11}),(u_{12},u_{13}),(u_{14},u_{15}),\\ & & 
(u_{14},u_{16})\},\\ 
\cB_{16}(J_{17}(s_3)) & = &  \{(u_3,u_4),(u_6,u_7),(u_7,u_8),(u_9,u_{10}),(u_9,u_{11}),(u_{12},u_{13}),(u_{14},u_{15}),\\ & & 
(u_{14},u_{16})\},\\ 
\cB_{17}(J_{18}(s_3)) & = &  \{(u_3,u_4),(u_6,u_7),(u_7,u_8),(u_9,u_{10}),(u_9,u_{11}),(u_{12},u_{13}),(u_{14},u_{15}),\\ & &  
(u_{14},u_{16}),(u_{17},u_{18})\},\\ 
\cB_{18}(J_{19}(s_3)) & = & \{(u_3,u_4),(u_6,u_7),(u_7,u_8),(u_9,u_{10}),(u_9,u_{11}),(u_{12},u_{13}),(u_{14},u_{15}),\\ & &  
(u_{14},u_{16}),(u_{17},u_{18})\},\\
\cB_{19}(J_{20}(s_3)) & = & \{(u_3,u_4),(u_6,u_7),(u_7,u_8),(u_9,u_{10}),(u_9,u_{11}),(u_{12},u_{13}),(u_{14},u_{15}),\\ & & 
(u_{14},u_{16}),(u_{17},u_{18}),(u_{19},u_{20})\},\\  
\cB_{20}(J_{21}(s_3)) & = & \{(u_3,u_4),(u_6,u_7),(u_7,u_8),(u_9,u_{10}),(u_9,u_{11}),(u_{12},u_{13}),(u_{14},u_{15}), \\ & & (u_{14},u_{16}),(u_{17},u_{18}),(u_{19},u_{20}),(u_{19},u_{21})\},\\ 
\cB_{21}(J_{22}(s_3)) & = & \{(u_3,u_4),(u_6,u_7),(u_7,u_8),(u_9,u_{10}),(u_9,u_{11}),(u_{12},u_{13}),(u_{14},u_{15}),\\ & & (u_{14},u_{16}),(u_{17},u_{18}),(u_{19},u_{20}),(u_{19},u_{21})\},\\ 
\cB_{22}(J_{23}(s_3)) & = & \{(u_3,u_4),(u_6,u_7),(u_7,u_8),(u_9,u_{10}),(u_9,u_{11}),(u_{12},u_{13}),(u_{14},u_{15}), \\ & & (u_{14},u_{16}),(u_{17},u_{18}),(u_{19},u_{20}),(u_{19},u_{21}),(u_{22},u_{23})\},\\ 
\cB_{23}(J_{24}(s_3)) & = & \{(u_3,u_4),(u_6,u_7),(u_7,u_8),(u_9,u_{10}),(u_9,u_{11}),(u_{12},u_{13}),(u_{14},u_{15}),\\ & & (u_{14},u_{16}),(u_{17},u_{18}),(u_{19},u_{20}),(u_{19},u_{21}),(u_{22},u_{23})\},\\ 
\cB_{24}(J_{25}(s_3)) & = & \{(u_3,u_4),(u_6,u_7),(u_7,u_8),(u_9,u_{10}),(u_9,u_{11}),(u_{12},u_{13}),(u_{14},u_{15}),\\ & &  (u_{14},u_{16}),(u_{17},u_{18}),(u_{19},u_{20}),(u_{19},u_{21}),(u_{22},u_{23}),(u_{24},u_{25})\},\\ 
\cB_{25}(J_{26}(s_3)) & = & \{(u_3,u_4),(u_6,u_7),(u_7,u_8),(u_9,u_{10}),(u_9,u_{11}),(u_{12},u_{13}),(u_{14},u_{15}),\\ & & (u_{14},u_{16}),(u_{17},u_{18}), (u_{19},u_{20}),(u_{19},u_{21}),(u_{22},u_{23}),(u_{24},u_{25}),\\ & & (u_{24},u_{26})\},\\
\cB_{26}(J_{27}(s_3)) & = & \{(u_3,u_4),(u_6,u_7),(u_7,u_8),(u_9,u_{10}),(u_9,u_{11}),(u_{12},u_{13}),(u_{14},u_{15}),\\ & & (u_{14},u_{16}),(u_{17},u_{18}), (u_{19},u_{20}),(u_{19},u_{21}),(u_{22},u_{23}),(u_{24},u_{25}),\\ & & (u_{24},u_{26})\},\\  
\cB_{27}(J_{28}(s_3)) & = & \{(u_3,u_4),(u_6,u_7),(u_7,u_8),(u_9,u_{10}),(u_9,u_{11}),(u_{12},u_{13}),(u_{14},u_{15}),\\ & & (u_{14},u_{16}),(u_{17},u_{18}), (u_{19},u_{20}),(u_{19},u_{21}),(u_{22},u_{23}),(u_{24},u_{25}),\\  & &  (u_{24},u_{26}),
(u_{27},u_{28})\},\\ 
\cB_{28}(J_{29}(s_3)) & = & \{(u_3,u_4),(u_6,u_7),(u_7,u_8),(u_9,u_{10}),(u_9,u_{11}),(u_{12},u_{13}),(u_{14},u_{15}),\\ & & (u_{14},u_{16}),(u_{17},u_{18}), (u_{19},u_{20}),(u_{19},u_{21}),(u_{22},u_{23}),(u_{24},u_{25}),\\  & &  (u_{24},u_{26}), (u_{27},u_{28})\}\\ 
\cB_{29}(J_{30}(s_3)) & = & \{(u_3,u_4),(u_6,u_7),(u_7,u_8),(u_9,u_{10}),(u_9,u_{11}),(u_{12},u_{13}),(u_{14},u_{15}),\\ & & (u_{14},u_{16}),(u_{17},u_{18}), (u_{19},u_{20}),(u_{19},u_{21}),(u_{22},u_{23}),(u_{24},u_{25}),\\  & &  (u_{24},u_{26}), (u_{27},u_{28}),(u_{29},u_{30})\},\\ 
\cB_{30}(J_{31}(s_3)) & = & \{(u_3,u_4),(u_6,u_7),(u_7,u_8),(u_9,u_{10}),(u_9,u_{11}),(u_{12},u_{13}),(u_{14},u_{15}),\\ & & (u_{14},u_{16}),(u_{17},u_{18}), (u_{19},u_{20}),(u_{19},u_{21}),(u_{22},u_{23}),(u_{24},u_{25}), \\ & & (u_{24},u_{26}), (u_{27},u_{28}),(u_{29},u_{30})\},\\ 
\cB_{31}(J_{32}(s_3)) & = & \{(u_3,u_4),(u_6,u_7),(u_7,u_8),(u_9,u_{10}),(u_9,u_{11}),(u_{12},u_{13}),(u_{14},u_{15}), \\ & & (u_{14},u_{16}),(u_{17},u_{18}), (u_{19},u_{20}),(u_{19},u_{21}),(u_{22},u_{23}),(u_{24},u_{25}),\\  & &  (u_{24},u_{26}), (u_{27},u_{28}),(u_{29},u_{30}),(u_{31},u_{32})\},\\
\cB_{32}(J_{33}(s_3)) & = & \{(u_3,u_4),(u_6,u_7),(u_7,u_8),(u_9,u_{10}),(u_9,u_{11}),(u_{12},u_{13}),(u_{14},u_{15}),\\ & & (u_{14},u_{16}),(u_{17},u_{18}), (u_{19},u_{20}),(u_{19},u_{21}),(u_{22},u_{23}),(u_{24},u_{25}), \\  & &  (u_{24},u_{26}), (u_{27},u_{28}),(u_{29},u_{30}),(u_{31},u_{32}),(u_{32},u_{33})\},\\
\cB_{33}(J_{34}(s_3)) & = & \{(u_3,u_4),(u_6,u_7),(u_7,u_8),(u_9,u_{10}),(u_9,u_{11}),(u_{12},u_{13}),(u_{14},u_{15}),\\ & &  (u_{14},u_{16}),(u_{17},u_{18}), (u_{19},u_{20}),(u_{19},u_{21}),(u_{22},u_{23}),(u_{24},u_{25}),\\ & & (u_{24},u_{26}), (u_{27},u_{28}),(u_{29},u_{30}),(u_{31},u_{32}),(u_{32},u_{33})\},\\ 
\cB_{34}(J_{35}(s_3)) & = & \{(u_3,u_4),(u_6,u_7),(u_7,u_8),(u_9,u_{10}),(u_9,u_{11}),(u_{12},u_{13}),(u_{14},u_{15}),\\ & & (u_{14},u_{16}),(u_{17},u_{18}), (u_{19},u_{20}),(u_{19},u_{21}),(u_{22},u_{23}),(u_{24},u_{25}),\\ & & (u_{24},u_{26}),  (u_{27},u_{28}),(u_{29},u_{30}),(u_{31},u_{32}),(u_{32},u_{33}),(u_{34},u_{35})\}.
\end{eqnarray*}

Figure \ref{fig:Fig-4} depicts $J_{18}(s_3)$.

\begin{figure}[h!]
\centering
\includegraphics[width=0.8\linewidth]{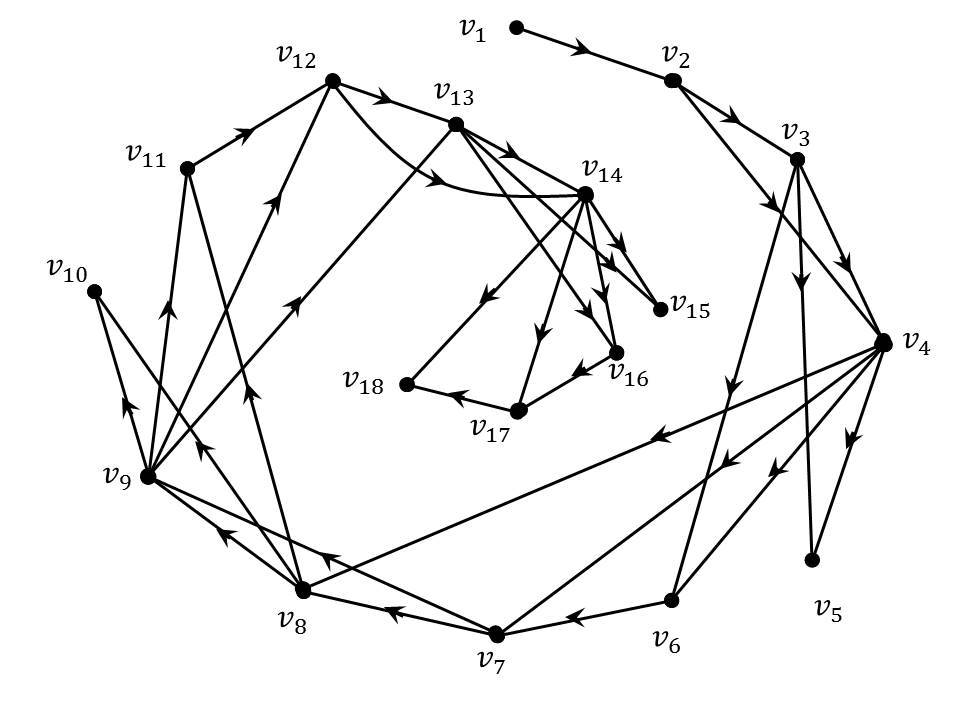}
\caption{$J_{18}(s_3)$.}
\label{fig:Fig-4}
\end{figure}

\section{Application of a General Black Arc Algorithm}

A general Black Arc Algorithm is not known yet. However, in the event of finding such an algorithm, the complexity of application to energy graphs with cut vertices can be simplified through partitioned application. It is known that a vertex $u_i \in V(G)$ is a cut vertex of $G$ if and only if the set of arcs $A(G)$ can be partitioned into subsets $A_1(G)$, $A_2(G)$ and the induced arc-subgraphs $\langle A_1(G)\rangle$ and $\langle A_2(G)\rangle$ have only vertex $u_i$ in common.

\begin{thm}
Let the energy graph $G$ have $c$ cut vertices and the subsets $A_1(G),\\ A_2(G), A_3(G),\ldots, A_\ell(G)$ of $A(G)$ form the maximal partition of $A(G)$ such that all pairs of distinct induced arc-subgraphs $\langle A_i(G)\rangle$ and $\langle A_j(G)\rangle$, where $i,j \in \{1,2,\ldots,\ell\}$, have at most one vertex in common. Then, $b^\bullet(G) = \sum\limits_{i=1}^{\ell} b^\bullet(\langle A_i(G)\rangle)$.
\end{thm}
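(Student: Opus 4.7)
\emph{Overview.} The plan is to prove the equality by establishing both inequalities, writing $B_i = \langle A_i(G)\rangle$ for brevity. Two structural observations drive the argument. First, since $\{A_i(G)\}$ is a partition of $A(G)$, every arc --- and in particular every black arc --- of $G$ belongs to exactly one $A_i(G)$, so the black-arc count decomposes naturally by block. Second, each $B_i$ is a graph-theoretic block of the underlying graph $G^*$ (the maximal partition condition forces this), so every cycle of $G^*$ lies inside a single $B_i$, and in the block-cut tree any two vertices of $V(B_i)$ communicate with the rest of $G$ only through cut vertices of $B_i$. Combined with the fact that energy graphs are DAGs, this forbids any directed path of $G$ between two vertices of $V(B_i)$ from leaving $B_i$ and returning.

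\emph{Upper bound.} For $b^\bullet(G)\le\sum_{i=1}^{\ell} b^\bullet(B_i)$, I would construct an orientation of $G$ realising the sum. Root the block-cut tree at an arbitrarily chosen block and walk it in BFS order. For each non-root block $B_i$ with parent-side cut vertex $w_i$, fix an optimal orientation $\varphi_i^*$ of $B_i$ in which $w_i$ is the unique source; the root block receives any optimal orientation. Concatenating the per-block labelings through these cut vertices produces a consistent global topological labeling of $G$. Because $B_i$ is entered from outside only at $w_i$, the arrival-time vector on $V(B_i)$ under the global orientation is exactly the arrival-time vector of the isolated propagation of $(B_i,\varphi_i^*)$ translated by $t^*_G(w_i)$, so by Lemma~\ref{Lem-3.2} exactly $b^\bullet(B_i)$ arcs of $A_i(G)$ are black in $G$. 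Summing gives the upper bound.

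\emph{Lower bound and main obstacle.} For $b^\bullet(G)\ge\sum_{i=1}^{\ell} b^\bullet(B_i)$, I fix any orientation $\varphi$ of $G$ attaining $b^\bullet(G)$ and let $\varphi_i$ be its restriction to $B_i$. I would argue that the number of black arcs of $(G,\varphi)$ lying in $A_i(G)$ is at least $b^\bullet(B_i)$: the DAG-plus-block-cut structure ensures that any ``shortcut'' certificate for a non-black arc inside $B_i$ cannot be outstripped by an outside path in $G$, so the combinatorial pressure that forces $b^\bullet(B_i)$ black arcs inside $B_i$ persists under embedding. Summing over blocks and minimising over $\varphi$ then yields the inequality. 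The principal obstacle I foresee is the auxiliary statement used in the upper bound, namely that every block $B_i$ admits an optimal orientation with a prescribed cut vertex as its unique source; this is not immediate from the definition of $b^\bullet$ and seems to require a separate re-rooting lemma for $2$-edge-connected pieces, essentially showing that the minimum black-arc count of a block is realised by an orientation with an arbitrary designated vertex playing the role of $u_1$. The lower-bound claim, while plausible, also needs careful handling and would probably be argued by contradiction: a hypothetical deficit in the $G$-black arcs lying in $A_i$ would, after restricting the $G$-arrival times to $V(B_i)$, yield an orientation of $B_i$ realising fewer than $b^\bullet(B_i)$ black arcs, contradicting minimality.
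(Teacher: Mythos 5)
Your proposal reads the theorem as an identity between minima over all orientations and tries to prove two inequalities; the paper does something much weaker and more direct. It fixes the given orientation of the energy graph $G$, notes that no arc of $G$ joins two distinct induced arc-subgraphs (they meet only in cut vertices), and then argues by cases on whether a shared cut vertex $u$ is a source, a sink, or an intermediate vertex that the propagation inside each $\langle A_i(G)\rangle$, and hence its set of black arcs, is unaffected by the rest of $G$; the count then decomposes because the $A_i(G)$ partition $A(G)$. So the paper never constructs or compares orientations at all, and in particular never needs the re-rooting step you flag.

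The two places where your plan has genuine gaps are exactly the ones you half-acknowledge, and neither is easily repaired. First, the upper bound rests on the claim that every block admits a $b^\bullet$-optimal orientation in which an arbitrarily prescribed cut vertex is the unique source; this ``re-rooting lemma'' is nowhere proved, is not obviously true (optimality of an orientation of a $2$-connected piece may well depend on where the source sits), and without it the gluing construction does not yield $\sum_i b^\bullet(\langle A_i(G)\rangle)$ black arcs. Second, the lower bound tacitly assumes that the restriction of the global propagation to a block $B_i$ is (a time-translate of) the isolated propagation of some orientation of $B_i$. That fails when $B_i$ contains two or more cut vertices at which energy enters $G$-propagation at \emph{different} times: the isolated process ignites all sources of $\varphi_i$ simultaneously at $t=0$, whereas the embedded process has staggered ignition times, and by Lemma~\ref{Lem-3.2} the black/non-black status of an arc depends on these relative offsets, so arcs can flip in either direction. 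Hence ``a deficit in $G$ yields an orientation of $B_i$ beating $b^\bullet(B_i)$'' does not follow. (To be fair, the paper's own Case~3 also asserts this independence rather than proving it, but the paper at least sidesteps the orientation-minimisation issues entirely by working with a fixed energy graph.) If you want to keep your two-inequality framework, you must either prove the re-rooting lemma and a staggered-ignition invariance statement, or retreat to the paper's reading of the theorem as a statement about one fixed orientation and its restrictions.
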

\begin{proof}
Clearly, an induced arc-subgraph $\langle A_i(G)\rangle$ has no cut vertex else, the arc partitioning is not a maximum. Furthermore,   for those distinct pairs of induced arc-subgraphs which have a vertex  $u$ in common, such $u$ is a cut vertex of $G$. It is observed that for any induced arc-subgraph $\langle A_i(G)\rangle$ at least one induced arc-subgraph $\langle A_j(G)\rangle$ exists such that a common vertex $u \in V(G)$ between the arc-induced subgraphs exists else, $G$ is disconnected. Consider any distinct pair of arc-induced subgraphs $\langle A_i(G)\rangle$ and $\langle A_j(G)\rangle$ which share the common vertex $u$.

\vspace{0.2cm}

\textit{Case 1:} If $u$ is a source vertex in $G$ then $u$ is a source vertex in $\langle A_i(G)\rangle$ and $\langle A_j(G)\rangle$, respectively. Hence during the time interval $[0,1)$ of propagation no arc is rendered a black arc.

\vspace{0.2cm}

\textit{Case 2:} If $u$ is a sink vertex in $G$ then $u$ is a sink vertex in $\langle A_i(G)\rangle$ and $\langle A_j(G)\rangle$, respectively. Let $t',t''$ be the total (exhaustive) propagation time through $\langle A_i(G)\rangle$ and $\langle A_j(G)\rangle$, respectively.  Since a sink vertex can never initiate further propagation it cannot propagate kinetic energy or particle mass which could dissipate. Hence, during the time interval $[0,t')$ of propagation the arcs in $\langle A_i(G)\rangle$ rendered black arcs are identical to those found in $G$.  Similar reasoning applies to $\langle A_j(G)\rangle$ over the propagation period $[0,t'')$.

\vspace{0.2cm}

\textit{Case 3:} If $u$ is an intermediate ($d_G^+(u) > 0$, $d^-_G(u) >0$) vertex the independence of both propagation and the creation of black arcs lie in the fact that for $u_l \in V(\langle A_i(G)\rangle)$ and $u_k \in V(\langle A_j(G)\rangle)$ the arc $(u_l,u_k) \notin A(G)$. Therefore, the black arcs resulting from propagation through $\langle A_i(G)\rangle$ during $[0,t')$ and those resulting from propagation through $\langle A_j(G)\rangle$ during $[0,t'')$ are independent in each induced arc-subgraph and therefore identical to those found in $G$. 

\vspace{0.2cm}

Therefore,  $b^\bullet(G) = \sum\limits_{i=1}^{\ell} b^\bullet(\langle A_i(G)\rangle)$, completing the proof.
\end{proof}

\section{Conclusion}

There is a wide scope for further research in respect of total dissipated black energy and determining the black arc number for different classes of graphs. The Jaco-type Black Arc Algorithm poses the challenge of complexity analysis. Such analysis will contribute to theoretical computer science. Determining the amount of kinetic energy which dissipate from an energy graph $G$ is an open problem. Studying the properties of solid subgraphs is also wide open. Some of the other open problems we have identified during our study are the following.

\begin{prob}{\rm 
Verify whether the solid subgraph of an energy graph $G$ is connected.}
\end{prob}

\begin{prob}{\rm 
Find a closed formula for $b^\bullet(J_n(s_3))$ where $s_3 = \{$mod 5 sequence$\}$, either in terms of the black arc number or in terms of appropriate vertex indices.}
\end{prob}

\begin{prob}{\rm 
Describe a Black Arc Algorithm for a general energy graph.}
\end{prob}

\begin{rem}{\rm 
A Jaco-type graph can alternatively be defined as follows. A directed graph $G$ is a Jaco-type graph if the vertex set is the set of positive integers and there is a non-decreasing sequence $s=\{a_n\}_{n\in \N}$ of positive integers such that a pair $(n,m)$ is an arc of $G$ if and only if $n<m\leq a_n +n$. }
\end{rem}

An interesting, directed complement graph exists namely, the graph $G'$ with arcs $(n,m)$ such that $n<m$ and $(n,m)\notin A(G)$. Hence, all pairs $(n,m)$ such that $a_n+n<m$ belong to $G'$ so a strict order on $\N$ is defined. Ordered sets obtained in this way have a \textit{minimal type}. It means the sets are infinite and no proper initial segment is finite.

\vspace{0.2cm}

Conversely, a poset $P$ has minimal type if and only if elements can be enumerated in a sequence$\{v_n\}_{n \in \N}$ and there is a sequence $\{a_n\}_{n\in \N}$ of positive integers such that $v_n<v_m$ if $n+a_n < m$. Also an \textit{interval order} is an ordered st $P$ in which the poset $Q$ made of the direct sum of two 2-element chains does not embed. If moreover, $P$ does not embed the poset made of the direct sum of a 3-element chain and a 1-element chain then $P$ is a semiorder. Also, for interval orders both the predecessor and the successor order are total quasi-order. It implies that a graph $G$ is a Jaco-type graph with enumeration $\{v_n\}_n$ if and only if the order attached to the enumeration is the predecessor order that corresponds to the order $G'$ on the directed complement as defined above. Finally, an order is minimal if and only if it extends the directed complement of a Jaco-type graph.

\vspace{0.2cm}

A valid observation is that the sequence of positive integers need not necessarily be non-decreasing. This observation and the perspective from a set theory point of view opens further research. 

\section*{Acknowledgements}

The authors of this article gratefully acknowledge the critical and constructive comments of the anonymous referee, which significantly improved the content and presentation of this article.


\begin{thebibliography}{99}
	
\bibitem{BM1} J. A. Bondy, U. S. R. Murty, \textbf{Graph theory with applications,} Macmillan Press, London, (1976).

\bibitem{CL1} G. Chartrand and L. Lesniak, \textbf{Graphs and digraphs}, CRC Press, (2000).

\bibitem{FH} F. Harary, \textbf{Graph theory}, Narosa Publishing House, New Delhi, 2001.

 \bibitem{KF1} J. Kok, P. Fisher, B. Wilkens, M. Mabula, V. Mukungunugwa, \textit{Characteristics of finite Jaco graphs, $J_n(1), n \in \N$}, preprint, arXiv: 1404.0484v1[math.CO].
   
\bibitem{KF2} J. Kok, P. Fisher, B. Wilkens, M. Mabula, V. Mukungunugwa, \textit{Characteristics of Jaco graphs, $J_\infty(a), a \in \N$}, preprint, arXiv: 1404.1714v1[math.CO].
  
\bibitem{KSS1} J. Kok, C. Susanth, S. J. Kalayathankal, \textit{A study on linear Jaco graphs}, J. Inform. Math. Sci., \textbf{7}(2), (2015), 69-80.

\bibitem{JK} J. Kok, \textit{Linear Jaco graphs: A critical review}, J. Inform. Math. Sci., \textbf{8}(2)(2016), 67-103.

\bibitem{KSC1} J. Kok, N. K. Sudev, K. P. Chithra, \textit{A study on Jaco-type graphs}, J. Inform. Math. Sci., \textbf{8}(2)(2016), 105-112.

\bibitem{KS1} J. Kok, N. K. Sudev, \textit{A study on primitive holes of certain graphs}, Int. J. Sci. Eng. Res., \textbf{6}(3)(2015), 631-635.

\bibitem {DBW} D. B. West, {\bf Introduction to graph theory}, Pearson Education Inc., Delhi., 2001.

\end{thebibliography}
\end{document}